\DeclareMathOperator*{\minimize}{\text{minimize}}
\newcommand{\R}{\mathbb{R}}
\newcommand{\N}{\mathbb{N}}
\newcommand{\Rn}{\mathbb{R}^n}
\newcommand{\Prob}{\mathbb{P}}
\newcommand{\dist}{\mathrm{dist}}
\newcommand{\sL}{{\sf L}}
\newcommand{\sG}{{\sf G}}
\newcommand{\cO}{\mathcal O}
\newcommand{\RR}{{\sf RR }}
\newcommand{\RRp}{{\sf RR}}
\newcommand{\IG}{{\sf IG }}
\definecolor{purp}{RGB}{152,24,147}
\definecolor{bluep}{RGB}{0,128,255}
\definecolor{redp}{RGB}{255,0,0}
\definecolor{orangep}{RGB}{255,128,0}
\newcommand{\revise}{}
\newcommand{\iprod}[2]{\left\langle #1, #2 \right\rangle}
\DeclareMathOperator*{\argmin}{argmin}
\newcommand{\be}{\begin{equation}}
	\newcommand{\ee}{\end{equation}}
\crefname{hypothesis}{Hypothesis}{Hypotheses}
\title{Convergence of  Random Reshuffling  Under \\ The Kurdyka-{\L}ojasiewicz Inequality}
\author{
Xiao Li\thanks{School of Data Science (SDS), Shenzhen Institute of Artificial Intelligence and Robotics for Society (AIRS), The Chinese University of Hong Kong, Shenzhen, China  (\texttt{lixiao@cuhk.edu.cn}).}
\and Andre Milzarek\thanks{School of Data Science (SDS), Shenzhen Research Institute of Big Data (SRIBD), The Chinese University of Hong Kong, Shenzhen, China (\texttt{andremilzarek@cuhk.edu.cn}).}
\and Junwen Qiu\thanks{School of Data Science (SDS), Shenzhen Institute of Artificial Intelligence and Robotics for Society (AIRS), Shenzhen Research Institute of Big Data (SRIBD), The Chinese University of Hong Kong, Shenzhen, China (\texttt{junwenqiu@link.cuhk.edu.cn}).}
}
\begin{document}

\maketitle

\begin{abstract}
We study the random reshuffling ({\sf RR}) method  for smooth nonconvex optimization problems with a finite-sum structure.  Though this method is widely utilized in practice, e.g., in the training of neural networks, its convergence behavior is only understood in  several limited settings. In this paper, under the well-known Kurdyka-{\L}ojasiewicz (KL) inequality,  we establish strong limit-point convergence results for \RR with appropriate diminishing step sizes, namely, the whole sequence of iterates generated by \RR is convergent and converges to a single stationary point in an almost sure sense. In addition, we derive the corresponding rate of convergence, depending on the KL exponent and suitably selected diminishing step sizes. When the KL exponent lies in $[0,\frac12]$, the convergence is at a rate of $\cO(t^{-1})$ with $t$ counting the number of iterations. When the KL exponent belongs to $(\frac12,1)$, our derived convergence rate is of the form $\cO(t^{-q})$ with $q\in (0,1)$ depending on the KL exponent. The standard KL inequality-based convergence analysis framework only applies to algorithms with a certain descent property.  We conduct a novel convergence analysis for the \emph{non-descent} \RR method with \emph{diminishing step sizes} based on the KL inequality, which generalizes the standard KL framework.  We summarize our main steps and core ideas in an informal analysis framework, which is of independent interest. As a direct application of this framework, we establish similar  strong limit-point convergence results for the reshuffled proximal point method.
\end{abstract}

\begin{keywords} random reshuffling, Kurdyka-{\L}ojasiewicz framework, strong limit-point convergence, convergence rates
\end{keywords}

\begin{AMS} 90C30, 90C06, 90C26, 90C15
\end{AMS}

\section{Introduction}
In this paper, we consider the finite-sum optimization problem
\begin{equation}
	\label{eq:problem} \minimize_{x\in \Rn}~f(x) = \frac{1}{N}\sum_{i=1}^{N}f(x,i), 
\end{equation}
where each component function $f(\cdot,i) : \Rn \to \R$ is continuously differentiable but not necessarily convex. Problem \eqref{eq:problem} appears frequently in various engineering fields such as machine learning and signal processing \revise{\cite{bottou2018optimization,chi2019nonconvex}}. 
\revise{This work aims at studying the \emph{random reshuffling} ({\sf RR}) method for solving problem \eqref{eq:problem}. In the next subsection, we first introduce \RR and its special case, the incremental gradient method, in detail and then discuss motivational background for \RRp}. 

\subsection{The Random Reshuffling Method}\label{sec:alg}
We now review the core procedures of \RRp. First, let us define the set of all possible permutations of $\{1,2,\ldots, N\}$  as 
\be\label{eq:permutations}
\Lambda:= \{\sigma: \sigma \text{ is a permutation of } \{1,2,\ldots, N\}\}. 
\ee
At each iteration $t$, a  permutation $\sigma^t$   is generated according to an i.i.d. uniform distribution over $\Lambda$. Then, \RR updates $x^{t-1}$ to $x^{t}$ through $N$ consecutive gradient descent-type steps by using the components $\{f(\cdot, \sigma^t_1), \ldots, f(\cdot, \sigma^t_N)\}$ sequentially, where  $\sigma^t_i$ represents the $i$-th element of $\sigma^t$.  In each  step, only one component $f(\cdot, \sigma^t_i)$ is selected for updating. To be more specific,  this method starts with $\tilde x^t_0 = x^{t-1}$ and then uses $f(\cdot, \sigma^t_i)$ to update $\tilde x^t_{i}$  as 
\be\label{eq:RS update}
\tilde x^t_{i} = \tilde x^t_{i-1} - \alpha_t \nabla f(\tilde x^t_{i-1}, \sigma^t_i) 
\ee
for $i=1, 2, \ldots, N$, resulting in $x^t = \tilde x^t_N$. The code of \RR is shown in \Cref{algo:gm-shuff}. 

\begin{algorithm}[t]
	\caption{{\RRp}: Random Reshuffling}
	\label{algo:gm-shuff}
	\begin{algorithmic}[1]
		\REQUIRE {Choose an initial point $x^0\in\Rn$;}
		\STATE {Set iteration count $t=1$;}
		\WHILE {stopping criterion not met}
		\STATE {Update the step size $\alpha_t$ according to a certain rule;}
		\STATE{ Generate a uniformly random permutation $\sigma^{t}$ of $\{1,\ldots, N\}$;}
		\STATE {Set $\tilde x_0^{t}=x^{t-1}$; }
		\FOR {$i = 1,2,\ldots, N$}
		\STATE {Update the inner iterate via $\tilde x_{i}^{t}=\tilde x_{i-1}^{t}-\alpha_{t}\nabla {f}(\tilde x_{i-1}^{t},\sigma^{t}_{i})$;}
		\ENDFOR
		\STATE {Set $x^t = \tilde x_{N}^{t}$;}
		\STATE {Update iteration count $t = t+1$;}
		\ENDWHILE
	\end{algorithmic}
\end{algorithm}

A special case of \RR is the so-called \emph{incremental gradient method} ({\sf IG}), which uses $\sigma^t =  \{1,2,\ldots, N\}$  for all $t\geq 0$. Thus, it updates  $x^{t-1}$ to $x^{t}$ through \eqref{eq:RS update} by utilizing the components $\{f(\cdot, 1), \ldots, f(\cdot, N) \}$ sequentially. Hence, all our subsequent conclusions for \RR naturally apply to the incremental gradient method. 

\textbf{\revise{Motivations.}} One classical algorithm for addressing problem \eqref{eq:problem} is the gradient descent method. 
However, in many modern large-scale applications, a fundamental challenge of problem \eqref{eq:problem} is that the number of components $N$ can be extremely large. Therefore,  using the full information of $f$ (say the full gradient of $f$) in each update  can be  computationally prohibitive.   This observation is precisely one of the main motivations of \RRp, which is tailored to modern large-scale optimization problems with finite-sum structure.  In contrast to the standard gradient descent method, \RR  performs  a gradient descent-type step in each update by using the gradient of only \emph{one} component function rather than \emph{all} the components of $f$.  In terms of empirical performance, it has been reported that \RR can outperform the gradient descent method for large-scale instances of problem \eqref{eq:problem}; see \cite{bertsekas2011} for more details.  

\revise{Another popular and ubiquitous algorithm for solving problem \eqref{eq:problem} is the stochastic gradient method (SGD) \cite{robbins1951stochastic}. Though SGD is widely studied in theory, what is commonly implemented in practice is actually \RR  \cite{bottou2012,shamir2016,Nagaraj2019,haochen2019}. Indeed, \RR is also known as ``SGD without replacement'' \cite{shamir2016,Nagaraj2019}.  The superiority of \RR over SGD mainly stems from the following observations: 1) The random reshuffling sampling scheme is easier and faster to implement than the i.i.d. random sampling required in SGD. 2) \RR utilizes all the data points in each epoch and hence, often has better theoretical and empirical performance.
As a result of this superiority, \RR has been included in well-known software packages such as PyTorch \cite{paszke2019pytorch} and TensorFlow \cite{tensorflow2015} as a fundamental solver and is used in a vast variety of engineering fields.}  Most notably, \RR is extensively applied in practice for training deep neural networks; see, e.g., \cite{bertsekas2011,gurbu2019,haochen2019,Nagaraj2019,nguyen2020unified}. These special instances of \RR and {\sf IG} are part of the family of backpropagation algorithms for the training of neural networks \cite{bertsekas2011}.

Although \RR enjoys vast applicability in practice, its convergence behavior is primarily studied under strong convexity; see, e.g., \cite{gurbu2019}. This restrictive setting is mainly motivated by the claim that \RR significantly outperforms SGD for strongly convex problems. In this work, we will establish much broader convergence results for \RR under the Kurdyka-{\L}ojasiewicz (KL) inequality. We summarize our main contributions below. 

\subsection{Main Contributions}

In order to compare different convergence results, we adopt the terminologies ``strong limit-point convergence'' and ``weak convergence'' used in \cite{absil2005}. Here, weak convergence means $\lim_{t \to \infty} \|\nabla f(x^t)\|= 0$, while strong limit-point convergence requires the whole sequence of iterates to converge to a single limit point and this limit point is a stationary point of \eqref{eq:problem}.\footnote{Note that strong limit-point convergence and weak convergence are also known as ``sequence convergence'' and ``subsequence convergence'', respectively.}

We study \RR (see \Cref{algo:gm-shuff}) for solving problem \eqref{eq:problem}. Our ultimate goal is to understand the convergence behavior of this algorithm under the so-called Kurdyka-{\L}ojasiewicz (KL) inequality of $f$ (see \Cref{Def:KL-property}). With the standard Lipschitz continuous gradient assumption (see \Cref{Assumption:1}) and the mild KL assumptions (see \Cref{Assumption:2}), we establish \emph{strong limit-point convergence} results for \RR equipped with proper \emph{diminishing step sizes} (see \Cref{thm:finite sum,thm:sto-rate}). That is,  the \emph{whole} sequence of iterates $\{x^t\}_{t\geq 1}$ generated by \Cref{algo:gm-shuff} converges to a stationary point $x^*$ of $f$ in an almost sure sense. Next, under the same setting, we establish the corresponding \emph{rate of convergence} (see \Cref{thm:convergence rate,thm:sto-rate}), which will depend on the KL exponent $\theta$ of $f$ (see \eqref{eq:Lojasiewicz ineq}) and on the selected step sizes $\{\alpha_t\}_{t \geq 1}$. With a suitable choice of the diminishing step sizes $\{\alpha_t\}_{t \geq 1}$, our rate results can be summarized as:
\begin{itemize}
\item[1.] If $\theta\in [0,\frac{1}{2}]$ at the target stationary point $x^*$, then $\{x^t\}_{t\geq 1}$ will converge to $x^*$ at a rate of $\mathcal O(t^{-1})$. Remarkably, this rate matches the best available rate for $\RR$ obtained in strongly convex optimization; see, e.g., \cite{gurbu2019}.  
\item[2.] If $\theta \in (\frac{1}{2},1)$ at $x^*$, then $\{x^t\}_{t\geq 1}$ will converge to $x^*$ at a rate of $\cO(t^{-q})$ with $q\in (0,1)$ depending on the KL exponent $\theta$. 
\end{itemize}
A more detailed discussion of the possible rates of convergence and their dependence on the KL exponent $\theta$ and step sizes $\{\alpha_t\}_{t\geq1}$ is given in \Cref{sec:rate}. As a standard byproduct of the KL inequality-based analysis, all our convergence results apply to the last iterate of \Cref{algo:gm-shuff}. Our results extend the convergence analysis of \RR from the strongly convex regime to a much more general nonconvex setting.  To the best of our knowledge, all the above results are new for \RRp. 

\RR is a \emph{non-descent} method.  It can be viewed as a gradient descent method with an additional error term that is proportional to the step size \cite{BerTsi00,gurbu2019} and requires \emph{diminishing step sizes} to ensure convergence. Therefore, the standard KL inequality-based analysis developed in \cite{absil2005,AttBolRedSou10,AttBol09,AttBolSva13,BolSabTeb14}---which applies to algorithms that possess a certain sufficient decrease property---is not directly applicable here. Different to the standard KL framework, through invoking proper diminishing step sizes, we utilize a more elementary analysis to first establish weak convergence results that allows to avoid the sufficient decrease property. Then, by accumulating the potential ascent (i.e., error terms) of each step of {\sf RR}, we derive a novel auxiliary descent-type condition for the iterates. Based on this auxiliary condition, we combine the standard KL analysis framework and the dynamics of the diminishing step sizes to establish strong limit-point convergence results and derive the corresponding rate of convergence depending on the KL exponent $\theta$. Thus, our results generalize the standard KL analysis framework to cover a class of non-descent methods with diminishing step sizes. We summarize our main steps and core ideas in an informal analysis framework in \Cref{sec:framework}, which is of independent interest. As a direct application of this new KL analysis framework, we show similar convergence results for another important shuffling-type algorithm---the reshuffled proximal point method (see \Cref{sec:prox point}). 

We believe that our results and the general framework presented in \Cref{sec:framework} can also serve as a blueprint for the KL inequality-based convergence analysis of other non-descent methods with diminishing step sizes.

\subsection{Connections to Previous Works} 
\hspace{0.01cm}

$\bullet$ \textbf{The random reshuffling method.} There have been numerous works studying \RR for solving problem \eqref{eq:problem}. In what follows, we will briefly review some representative results for these methods and provide the connections to our results.



As pointed out by various works, e.g., \cite{bertsekas2011,bottou2009,gurbu2019}, \RR is widely utilized in practice for tackling large-scale machine learning tasks such as training neural networks. The short note \cite{bottou2009} shows empirical superiority of \RR over SGD.  \revise{Theoretically, understanding the observed superiority of \RR over SGD remains an important problem}.
In \cite{recht2012}, the authors conjectured a noncommutative arithmetic-geometric mean inequality that leads to a lower iteration complexity of \RR than that of SGD for least-squares optimization. However, this conjecture was later proved to be not true in general \cite{lai2020}. 
Recently, under the assumptions that the objective function $f$ is strongly convex, each component function $f(\cdot,i)$ has a Lipschitz continuous Hessian, and the iterates generated by \RR are uniformly bounded, G{\"u}rb{\"u}zbalaban et al. formally established that the whole sequence of q-suffix averaged iterates  converges to the unique optimal solution at a rate of $\cO({1}/{t})$ with high probability \cite{gurbu2019}.  The authors also commented that this rate result of \RR outperforms the min-max lower bound of the rate achieved by SGD for strongly convex minimization (i.e., $\Omega({1}/{\sqrt{t}})$; cf. \cite{agarwal2012,nemirovskij1983}). Initiated by these first insights, various works start to understand the properties of {\sf RR}, but mainly focusing on iteration complexity results; see, e.g., \cite{haochen2019,Nagaraj2019,mishchenko2020,nguyen2020unified,huang2021improved}. For instance, under the assumptions that $f$ is strongly convex and $f$, the gradient of $f$, and the Hessian of $f$ are all Lipschitz continuous, the work \cite{haochen2019} shows an $\cO(1/T)$ iteration complexity result in expectation for the distances between the iterates and the unique optimal solution, where  $T$ is the preset total number of iterations.  Relaxations of the different and partly stringent Lipschitz continuity conditions used in \cite{haochen2019} have been further discussed and investigated in \cite{Nagaraj2019,mishchenko2020}.
Note that there is not a direct way to compare our results to existing iteration complexity results since our KL inequality-based analysis is fundamentally different and describes asymptotic convergence behavior.

A common assumption imposed by the above mentioned works is that the objective function has to be strongly convex, which is typically not satisfied in many important applications. By contrast, our KL inequality-based convergence analysis applies to much more general nonconvex functions.  Specifically, when the KL exponent equals to $\frac{1}{2}$ at the stationary point that \RR converges to, the corresponding convergence rate will be $\cO({1}/{t})$, which  matches the rate of the strongly convex setting \cite{gurbu2019}. However, it should be noted that a function with a stationary point having KL exponent $\frac12$ can generally be nonconvex as well. 

As mentioned, \RR also covers the well-known and popular \IG method as a special case. In \cite{zhi1994analysis,tseng1998incremental,solodov1998incremental,solodov1998}, weak convergence results for \IG (i.e., $\lim_{t\rightarrow\infty}\|\nabla f(x^t)\| = 0$) are provided under various step size schemes and different assumptions on the problem. A more recent work \cite{gurbu2015} shows that if $f$ is strongly convex and the iterates generated by \IG are uniformly bounded, then the whole sequence of iterates will converge to the unique optimal solution at a rate of $\mathcal O(1/t)$. Our KL inequality-based analysis naturally extends these results for \IG to a broader nonconvex setting.

$\bullet$ \textbf{The KL inequality and the related convergence analysis framework.} 
\revise{The  Kurdyka-{\L}ojasiewicz (KL) inequality plays a key role in our analysis. We refer to a series of important works for the rich  theory and history of this property; see, e.g.,  \cite{lojasiewicz1959,lojasiewicz1963,lojasiewicz1993,kurdyka1998,kurdyka1994,BolDanLew-MS-06,BolDanLew06,BolDanLewShi07,BolDanLeyMaz10}.} The KL inequality is a powerful concept applicable to a large range of (nonconvex) functions arising in real-world problems; see \cite[Section 5]{BolSabTeb14} for a discussion. A striking application of this inequality  is to analyze the convergence (or the stronger finite length property) of the gradient flow in dynamical systems theory.  
Absil et al. \cite{absil2005} extended the analysis to obtain strong limit-point convergence  of numerical optimization algorithms when used to optimize real analytical functions, where the algorithms are assumed to satisfy a certain sufficient decrease property.  The results in \cite{absil2005} apply to the well-known gradient descent and trust region methods with appropriate line-search procedures.  The KL inequality-based convergence analysis was then extended and popularized by Attouch and Bolte et al.  \cite{AttBol09,AttBolRedSou10,AttBolSva13,BolSabTeb14} for establishing the strong limit-point convergence of several descent methods for nonsmooth nonconvex  optimization. In particular, the work \cite{AttBol09} considers the proximal point method for minimizing functions satisfying the KL inequality and obtains strong limit-point convergence results and the corresponding rate of convergence based on the KL exponent.
The strong limit-point convergence result was then shown for  proximal coordinate-type (or alternating-type) gradient methods in \cite{BolSabTeb14} when optimizing the sum of a smooth nonconvex function and a nonsmooth one.  Thanks to the aforementioned pioneering works, the strength of the KL inequality has been widely recognized in both optimization and engineering societies.  Now, the KL inequality has become one of the main tools for understanding convergence properties of (mostly nonconvex) optimization algorithms.

The standard KL inequality-based convergence analysis crucially relies on the descent property of the algorithm. Let us take the analysis framework summarized in \cite[Section 3.2]{BolSabTeb14} as an example. As a first step, the sufficient decrease property typically allows to show weak convergence. Then, by combining the sufficient decrease property, the weak convergence result, and the KL inequality, strong limit-point convergence as well as the corresponding rate of convergence can be established for the descent method in question. Most existing works follow this analysis framework to analyze descent algorithms. Besides, there is also a set of works considering methods that do not have a descent property on the objective function itself; see, e.g., the Douglas-Rachford splitting-type methods \cite{li2016douglas}, inertial proximal gradient methods \cite{ochs2014,boct2016} and its alternating version \cite{pock2016},  alternating direction method of multipliers (ADMM) \cite{li2015global,wang2019,boct2020}, to name a few. An almost universal proof strategy in these works is to construct a particular Lyapunov function (or merit function) for which the algorithm possesses a sufficient decrease property.  Then, the standard KL framework can be utilized to derive strong limit-point convergence results. The price to pay here is to assume the KL inequality on the Lyapunov function rather than the original objective function.  
This compensation can be mild if the goal is only to obtain strong limit-point convergence of the sequence of iterates since the KL inequality holds for a vast class of functions. Nonetheless, determining the KL exponent of the Lyapunov function given the exponent of the original objective function can be non-trivial.  

\revise{There are also a few works discussing the KL inequality-based analysis for descent-type methods with relative updating errors \cite{garrigos2015descent,frankel2015splitting}. The proof techniques in \cite{garrigos2015descent,frankel2015splitting} share certain similarities with our derivation for establishing the finite length result by invoking a summability condition on the relative errors (cf. our \eqref{eq:festi-kl} and \cite[Page 896]{frankel2015splitting}). However, the algorithms considered by Garrigos and Frankel et al. still satisfy a sufficient decrease property and utilize step sizes that are bounded away from zero. Hence, the analysis conducted in \cite{garrigos2015descent,frankel2015splitting}  for obtaining weak convergence, strong limit-point convergence, and convergence rates follows the standard one \cite{AttBol09,BolSabTeb14} and is different from our derivations.}

To summarize, the existing works mainly consider algorithms that possess a sufficient decrease property either on the objective function itself or on a well constructed Lyapunov function. 
By contrast, \RR is a \emph{non-descent} method and requires \emph{diminishing step sizes} to ensure convergence. Therefore, the standard KL inequality-based convergence analysis framework pioneered by the works \cite{absil2005,AttBol09,AttBolRedSou10,AttBolSva13,BolSabTeb14} is not directly applicable here.  Instead, we conduct a novel analysis by combining the standard KL framework and the dynamics of the diminishing step sizes to establish strong limit-point convergence results and the corresponding rate of convergence for \RRp.
It is also worth emphasizing that our analysis only involves the KL inequality of the objective function itself.   We refer to \Cref{sec:framework} for a more detailed summary. 

\section{Preliminaries and Assumptions}
\subsection{The Kurdyka-{\L}ojasiewicz  (KL) Inequality}
We start with reviewing the basic elements of the KL inequality, which will serve as the key tool for our convergence analysis of \RRp.  As we consider the case where the objective function $f$ in problem \eqref{eq:problem} is smooth,  we now present the definition of the KL inequality tailored to smooth functions.  More broadly, using appropriate subdifferentials, the KL inequality can be introduced for nonsmooth functions as well; see, e.g., \cite{AttBolRedSou10,AttBol09,AttBolSva13,BolSabTeb14}. 

\begin{definition}[KL inequality]
	\label{Def:KL-property}
	The function $f$ is said to satisfy the KL inequality at a point $\bar x\in \Rn$ if there exist $\eta\in(0,\infty],$ a neighborhood $U$ of $\bar x$, and a continuous and concave function $\varrho: [0,\eta)\to\mathbb{R}_+$ with
	\[ \varrho \in C^1((0,\eta)), \quad \varrho(0) = 0, \quad \text{and} \quad \varrho^\prime(x) > 0 \quad \forall~x \in (0,\eta), \]
	such that for all $x \in U \cap \{x: 0 < |f(x) - f(\bar x)| < \eta\}$ the KL inequality holds, i.e.,
	\be \label{eq:kl-ineq} \varrho^\prime(|f(x)-f(\bar x)|) \cdot \|\nabla{f}(x)\|\geq1. \ee 
\end{definition}
The KL inequality in \cref{Def:KL-property} is a slightly stronger variant of the classical condition, see, e.g., \cite[Definition 3.1]{AttBolRedSou10} for comparison. In fact, the KL inequality \eqref{eq:kl-ineq} is usually stated without taking the absolute value of $f(x) - f(\bar x)$. However, the KL inequality shown in \Cref{Def:KL-property} also holds for semialgebraic and subanalytic functions, which underlines its generality; see  \cite{BolDanLew-MS-06,BolDanLew06,BolDanLewShi07}. We refer to the paragraph after the proof of \cref{thm:finite sum} for further discussion. 


Let $ \mathcal S_\eta$ denote the class of all continuous, concave functions $\varrho : [0,\eta) \to \R_+$ satisfying the requirements in \cref{Def:KL-property}. Functions in this class are typically referred to as desingularizing functions. If the desingularizing function $\varrho \in \mathcal S_\eta$ additionally satisfies the following quasi-additivity-type property
\be \label{eq:qrho} [\varrho^\prime(x+y)]^{-1} \leq C_\rho[(\varrho^\prime(x))^{-1}+(\varrho^\prime(y))^{-1}] \quad \forall~x,y \in (0,\eta) \;  \text{with} \; x+y < \eta, \ee
for some $C_\rho > 0$, then we will write $\varrho \in \mathcal Q_\eta$. Let us further define 
\be\label{eq:Lojasiewicz function}
\mathcal L := \{\varrho : \R_+ \to \R_+: \exists~c > 0, \, \theta \in [0,1): \varrho(x) = cx^{1-\theta}\}. 
\ee
The desingularizing functions in $\mathcal L$ are called {\L}ojasiewicz functions. This class of functions obviously satisfies the conditions in \cref{Def:KL-property}, i.e., we have $\mathcal L \subset \mathcal S_\eta$ for all $\eta > 0$. If $f$ satisfies \eqref{eq:kl-ineq} with $\varrho(x) = cx^{1-\theta}$, $c > 0$, and $\theta \in [0,1)$ at some $\bar x$, then we say that $f$ satisies the KL inequality at $\bar x$ with exponent $\theta$. We note that the class of {\L}ojasiewicz functions $\mathcal L$ is a subset of $\mathcal Q_\eta$ for any $\eta > 0$. To clarify this claim, let us consider $\varrho\in \mathcal L$, i.e., we have $\varrho(x) = cx^{1-\theta}$ with exponent $\theta \in [0,1)$. Then, using $\varrho^\prime(x) = c(1-\theta)x^{-\theta}$ and the subadditivity of the mapping $x \mapsto x^\theta$, it readily follows $[\varrho^\prime(x+y)]^{-1} \leq (\varrho^\prime(x))^{-1} + (\varrho^\prime(y))^{-1}$ for all $x,y$. Consequently, in this case the constant $C_\rho$ in \eqref{eq:qrho} can be set to $C_\rho = 1$. Note that the class $\mathcal Q_\eta$ is generally larger than the set of {\L}ojasiewicz functions $\mathcal L$. In particular, the mapping $\varrho(x) = \log(1+x)$ is an example of a function satisfying $\varrho \in \mathcal Q_\infty$ but $\varrho \notin \mathcal L$. In fact, it is easy to verify $\varrho \in \mathcal S_\infty$ and for all $x,y > 0$, we have $[\varrho^\prime(x+y)]^{-1} = 1 + x + y \leq (1+x) + (1+y) = (\varrho^\prime(x))^{-1}+(\varrho^\prime(y))^{-1}$.

\subsection{Assumptions} \label{sec:assum}
Throughout this paper, we impose the following Lipschitz gradient assumption on each component function $f(\cdot, i)$ in problem \eqref{eq:problem}. 
\begin{assumption} \label{Assumption:1} We consider the condition:
	\begin{enumerate}[label=\textup{\textrm{(A.\arabic*)}},topsep=0pt,itemsep=0ex,partopsep=0ex]
		\item \label{A1} For all $i \in \{1,\ldots, N\}$, $f(\cdot, i)$ is bounded from below and the gradient $\nabla f(\cdot,i)$ is Lipschitz continuous with parameter ${\sf L}$.
	\end{enumerate}
\end{assumption}
This assumption also implies that $\nabla f$ is Lipschitz continuous with constant $\sL$. Such type of condition is ubiquitous in the analysis of smooth optimization algorithms.

Suppose that the gradient of a continuously differentiable function $h:\R^n \rightarrow \R$ is Lipschitz continuous with parameter $\sL$. Then, applying the so-called descent lemma (see, e.g., \cite{nesterov2003}), it follows:
\be\label{eq:descent lemma}
h(y) \leq h(x) + \langle  \nabla h(x), y- x\rangle + \frac{{\sf L}}{2}\|x-y\|^2, \quad \forall \ x, y \in \R^n.
\ee 
Let us now set $h \equiv f(\cdot, i)$ for some $i\in \{1,2,\ldots, N\}$. Choosing $y = x - \frac{1}{\sL} \nabla f(x,i)$ in \eqref{eq:descent lemma}, we can infer:
\be\label{eq:grad upper bound}
\|\nabla f(x, i)\|^2\leq 2 {\sf L} (f(x, i)-  \bar f_{\min}), \quad i = 1,2, \ldots, N,
\ee
where $ \bar f_{\min} := \min_{1\leq i\leq N}\{ f_{\min}(\cdot, i)\}$ with $f_{\min}(\cdot, i)$ being a lower bound of the mapping $f(\cdot, i)$. This variance-type bound will play a central role in our convergence analysis. 

We notice that \cref{algo:gm-shuff} is stochastic in nature and that it formally generates a stochastic process of iterates $\{X^t\}_{t \geq 1}$. Here, randomness is caused by the choice of the random permutations $\{\sigma^t\}_{t\geq 1}$ in step 4 of the algorithm. In the following, we first formulate assumptions for a single trajectory $\{x^t\}_{t\geq 1}$ of the stochastic process $\{X^t\}_{t\geq 1}$ and discuss convergence properties of $\{x^t\}_{t\geq 1}$ conditioned on those assumptions. Since our analysis is solely based on deterministic techniques, this later allows to easily establish convergence results for $\{X^t\}_{t\geq 1}$ in an almost sure sense. A detailed summary and discussion of the stochastic convergence behavior of $\{X^t\}_{t\geq 1}$ is provided in \Cref{sec:sto-sure}.

Let $\{ x^t\}_{t\geq 1}$ be generated by \cref{algo:gm-shuff}. We define the associated set of accumulation points as
\be\label{eq:limit point set}
\mathcal C := \{ x \in \Rn: \exists~\text{a subsequence} \; \{t_\ell\}_\ell \; \text{such that} \;  x^{t_\ell} \to x \text{ as } \ell \to \infty\}. \ee
Notice that the set $\mathcal C$ is closed by definition. We now state our main KL assumptions.
\begin{assumption}
	\label{Assumption:2} 
	We assume the following conditions:
	\begin{enumerate}[label=\textup{\textrm{(B.\arabic*)}},topsep=0pt,itemsep=0ex,partopsep=0ex]
		\item \label{B1} The sequence $\{ x^t\}_{t\geq 1}$ generated by \Cref{algo:gm-shuff} is bounded.
		\item \label{B2} The function $f$ satisfies the following KL inequality on $\mathcal C$: For every $\bar x \in \mathcal C$ there are $\eta \in (0,\infty]$, a neighborhood $U$ of $\bar x$, and a function $\varrho \in \mathcal Q_\eta$ such that
		\[ 
	     	\varrho^\prime(|f(x) - f(\bar x)|) \cdot \|\nabla f(x)\| \geq 1 \;\; \forall~x \in U \cap \{x \in \Rn: 0 < |f(x) - f(\bar x)| < \eta\}. 
		\]
		\item \label{B3} The KL property of $f$ formulated in \ref{B2} holds for every $\bar x \in \mathcal C$ and each respective desingularizing function $\varrho$ can be chosen from the class of {\L}ojasiewicz functions $\mathcal L$. 
	\end{enumerate}
\end{assumption}

Some remarks about \Cref{Assumption:2} are in order. One main feature of the KL inequality is that it holds naturally for subanalytic or semialgebraic functions, and hence it holds for a very general class of problems arising in practice.     We refer to \cite[Section 5]{BolSabTeb14} for a related discussion.  Therefore, assumptions \ref{B2} and \ref{B3} are very mild.   
In contrast to the standard KL framework  \cite{AttBolSva13,BolSabTeb14}, we will mainly work with desingularizing functions from the classes $\mathcal Q_\eta$ in \ref{B2} (for deriving strong limit-point convergence) and $\mathcal L$ in \ref{B3} (for deriving the rate of convergence). As we will see in detail in the following discussions, this stronger requirement is necessary to cope with the missing descent of \RRp. \revise{Let us again emphasize that using desingularizing functions from either the class $\mathcal Q_\eta$ or even the smaller class $\cal L$ does not affect the generality of the KL inequality, as the class $\mathcal L$ is already admitted by the general semialgebraic and subanalytic functions; see, e.g., \cite[Theorem \L 1]{kurdyka1998} and \cite[Corollary 16]{BolDanLewShi07}.} Condition \ref{B1} is often imposed in the KL framework; see, e.g., \cite{AttBolRedSou10,AttBol09,BolSabTeb14}. In fact, we can derive the boundedness of the sequence of function values $\{f(x^t)\}_{t\geq 1}$ under \Cref{Assumption:1}; see \Cref{lemma:approx descent} for details. Therefore, \ref{B1} is automatically satisfied once the function $f$ has bounded level sets, which is mild.

\section{Convergence Analysis}\label{sec:convergence analysis}
Equipped with all the machineries, we now turn to the convergence analysis of \RRp.  
As mentioned in \Cref{sec:assum}, \RR formally generates a stochastic process of iterates $\{X^t\}_{t\geq 1}$.  In order to make our derivations more transparent, we will first derive the aforementioned convergence results for a single trajectory $\{x^t\}_{t\geq 1}$ that can be viewed as the iterates generated by {\sf IG} or \RR with any fixed order $\sigma^t$ in \Cref{algo:gm-shuff}. In \Cref{sec:sto-sure}, we will  state the convergence results for the stochastic process $\{X^t\}_{t\geq 1}$ in an almost sure sense. 

\subsection{Weak Convergence Results}\label{sec:subsequential}
In this subsection, we show several important results including a recursion for \RR and the weak convergence results,  which will serve as the basis of our subsequent  analysis. We first establish a useful estimate for the sequence of iterates $\{x^t\}_{t\geq 1}$ and function values $\{f(x^t)\}_{t\geq 1}$.

\begin{lemma}\label{lemma:descent}
	Suppose that  assumption \ref{A1} is satisfied.  Let $\{x^t\}_{t\geq 1}$ be generated by \cref{algo:gm-shuff} for solving problem \eqref{eq:problem} with step sizes $\alpha_t \in (0,{1}/{(\sqrt{2}{\sf L}N)}]$. Then, for all $t\geq 1$, we have
	\be \label{eq:esti-t0} \begin{aligned}
		f(x^t) - \bar f_{\min} & \leq (1+ 2\sL^3 N^3 \alpha_t^3) \left( f(x^{t-1})- \bar f_{\min} \right)  - \frac{N\alpha_t}{2}\|\nabla{f}(x^{t-1})\|^2 \\ & \hspace{4ex}  -\frac{1 - \sL N \alpha_t}{2N \alpha_t}\left\| x^t-x^{t-1} \right\|^2, 
	\end{aligned} \ee
	where $\bar f_{\min}$, defined below \eqref{eq:grad upper bound}, is a constant.
\end{lemma}
\begin{proof}
Using the descent lemma of $f$ (by letting $h\equiv f$ in \eqref{eq:descent lemma}) gives
%
\begingroup
\allowdisplaybreaks
	\begin{align}
		\nonumber f(x^t)  - \bar f_{\min} & \leq f(x^{t-1})  - \bar f_{\min} +\iprod{\nabla{f}(x^{t-1})}{x^t-x^{t-1}}+\frac{{\sf L}}{2}\|x^t-x^{t-1}\|^2\\
		\nonumber & \hspace{-8.5ex} = f(x^{t-1}) - \bar f_{\min} -N\alpha_t\iprod{\nabla{f}(x^{t-1})}{\frac{1}{N}{\sum}_{i=1}^{N}\nabla{f}(\tilde x_{i-1}^{t},\sigma^{t}_i) }+\frac{{\sf L}}{2}\|x^t-x^{t-1}\|^2\\
		\nonumber  & \hspace{-8.5ex} = f(x^{t-1}) - \bar f_{\min} -\frac{N\alpha_t}{2}\|\nabla{f}(x^{t-1})\|^2 -\frac{N\alpha_t}{2}\left\|\frac{1}{N}{\sum}_{i=1}^{N}\nabla{f}(\tilde x^{t}_{i-1},\sigma^{t}_i)\right\|^2\\
		& \hspace{-4.5ex} +\frac{N\alpha_t}{2}\left\|\frac{1}{N}{\sum}_{i=1}^{N}\nabla{f}(x^{t-1},\sigma^{t}_i)-\frac{1}{N}{\sum}_{i=1}^{N}\nabla{f}(\tilde x^{t}_{i-1},\sigma^{t}_i)\right\|^2 + \frac{{\sf L}}{2}\|x^t-x^{t-1}\|^2, \label{eq:recursion-1}
	\end{align}
\endgroup
where the \revise{last} equality is due to $\iprod{a}{b}=\frac{1}{2}(\|a\|^2+\|b\|^2-\|a-b\|^2).$  To further derive an upper bound for the right-hand side of \eqref{eq:recursion-1}, we can compute
\begingroup
\allowdisplaybreaks
	\begin{align*}
\left\|\frac{1}{N}{\sum}_{i=1}^{N}\nabla{f}(x^{t-1},\sigma^{t}_i)-\frac{1}{N}{\sum}_{i=1}^{N}\nabla{f}(\tilde x^{t}_{i-1},\sigma^{t}_i)\right\|^2 & \\ & \hspace{-40ex} \leq \frac{1}{N}{\sum}_{i=1}^{N} \|\nabla{f}(x^{t-1},\sigma^{t}_i)-\nabla{f}(\tilde x^{t}_{i-1},\sigma^{t}_i)\|^2  \leq\frac{{\sf L}^2}{N}{\sum}_{i=1}^{N}\|x^{t-1}-\tilde x^{t}_{i-1}\|^2,
	\end{align*}
\endgroup
	where we applied \ref{A1} in the last inequality. 
	Let  $V_t:= \sum_{i=1}^{N}\|x^{t-1}-\tilde x^{t}_{i-1}\|^2$.  By following the arguments in \cite[Lemma 5]{mishchenko2020}, we have
\begingroup
\allowdisplaybreaks
	\begin{align*}
		V_t & = {\sum}_{i=1}^{N}\|x^{t-1}-\tilde x^{t}_{i-1}\|^2 = \alpha_{t}^2{\sum}_{i=2}^{N}\left\|{\sum}_{j=1}^{i-1}\nabla f(\tilde{x}_{j-1}^t,\sigma_j^t)\right\|^2 \\
		& \leq 2 \alpha_{t}^2\sum_{i=2}^{N}\left[ \left\|{\sum}_{j=1}^{i-1} \nabla f(\tilde{x}_{j-1}^t,\sigma_j^t) - \nabla f(x^{t-1},\sigma_j^t) \right\|^2 + \left\|{\sum}_{j=1}^{i-1}\nabla f(x^{t-1},\sigma_j^t)\right\|^2 \right] \\
		& \leq 2\alpha_{t}^2 \sum_{i=2}^{N}(i-1)\left[ {\sf L}^2{\sum}_{j=1}^{i-1}\left\|\tilde{x}_{j-1}^t-x^{t-1}\right\|^2 + {\sum}_{j=1}^{i-1}\left\|\nabla f(x^{t-1},\sigma_j^t)\right\|^2 \right] \\
		& \leq 2\alpha_{t}^2 \sum_{i=2}^{N}(i-1)\left[ {\sf L}^2V_t + 2{\sf L} {\sum}_{j=1}^{i-1} \left( f(x^{t-1}, \sigma_j^t) - \bar f_{\min} \right) \right] \\
		&\leq  {\sf L}^2 N^2 \alpha_{t}^2 V_t + 2 \sL N^3 \alpha_t^2 (f(x^{t-1})- \bar f_{\min}),
	\end{align*}
\endgroup
	where we used \ref{A1} and \eqref{eq:grad upper bound} in the third and fourth lines, respectively. It follows
	\be\label{eq:v_t}
	\begin{aligned}
		V_t \leq \frac{2 \sL N^3 \alpha_t^2}{1- {\sf L}^2 N^2 \alpha_{t}^2 }  \left( f(x^{t-1})- \bar f_{\min} \right) 
		\leq 4 \sL N^3 \alpha_t^2  \left( f(x^{t-1})- \bar f_{\min} \right), 
	\end{aligned}
	\ee
	where the last inequality is due to $\alpha_t \in (0, {1}/({\sqrt{2} \sL N})]$.  Invoking the above estimates in \eqref{eq:recursion-1} yields
	\begin{align*}
		f(x^t) - \bar f_{\min} &\leq (1+ 2\sL^3 N^3 \alpha_t^3) \left( f(x^{t-1})- \bar f_{\min} \right)  - \frac{N\alpha_t}{2}\|\nabla{f}(x^{t-1})\|^2 \\
		&\hspace{6ex}  -\frac{N\alpha_t}{2}\left\|\frac{1}{N}{\sum}_{i=1}^{N}\nabla{f}(\tilde x^{t}_{i-1},\sigma^{t}_i)\right\|^2 + \frac{{\sf L}}{2}\|x^t-x^{t-1}\|^2.
	\end{align*}
	Finally, recognizing $\|\alpha_t \sum_{i=1}^{N}\nabla{f}(\tilde x^{t}_{i-1},\sigma^{t}_i)\| = \|x^t -  x^{t-1}\|$ gives the desired result. 
\end{proof}

Based on \Cref{lemma:descent}, we can further simplify the recursion on the sequence of iterates $\{x^t\}_{t\geq 1}$. 

\begin{lemma}
	\label{lemma:approx descent}
	Under the setting of \Cref{lemma:descent}, suppose further that the step sizes $\{\alpha_t\}_{t\geq 1}$ satisfy
	$\alpha_t \in (0,{1}/{(\sqrt{2}{\sf L}N)}]$ and $\sum_{t=1}^\infty \alpha_t^3 < \infty$.
	Then, for all $t\geq 1$, we have
	\be\label{eq:approximate descent}
	f(x^t) \leq  f(x^{t-1})   - \frac{N\alpha_t}{2}\|\nabla{f}(x^{t-1})\|^2   -\frac{1 - \sL N \alpha_t}{2N \alpha_t}\left\| x^t-x^{t-1} \right\|^2 + 2\sG \sL^3 N^3  \alpha_t^3.
	\ee
	Here, $\sG := ( f(x^{0})- \bar f_{\min}) \exp(\sum_{j=1}^{\infty}  2\sL^3 N^3 \alpha_j^3)$ is a finite positive constant.
\end{lemma}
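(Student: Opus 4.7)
The plan is to derive the approximate descent inequality \eqref{eq:approximate descent} from the recursion in \Cref{lemma:descent} by first uniformly bounding the shifted function values $\{f(x^t) - \bar f_{\min}\}_{t\geq 0}$ and then linearizing the multiplicative factor $(1 + 2\sL^3 N^3 \alpha_t^3)$.

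First, I would observe that by the definition of $\bar f_{\min}$ we have $f(x) = \frac{1}{N}\sum_{i=1}^N f(x,i) \geq \bar f_{\min}$ for all $x \in \Rn$, so the negative terms on the right-hand side of the recursion in \Cref{lemma:descent} can be dropped to yield the one-sided estimate
\[
f(x^t) - \bar f_{\min} \leq (1+2\sL^3 N^3 \alpha_t^3)(f(x^{t-1}) - \bar f_{\min}).
\]
Iterating this bound from $t$ down to $0$ and applying the elementary inequality $1+y \leq e^y$ gives
\[
f(x^t) - \bar f_{\min} \leq \prod_{j=1}^t (1+2\sL^3 N^3 \alpha_j^3) \cdot (f(x^0) - \bar f_{\min}) \leq \exp\!\left(\sum_{j=1}^\infty 2\sL^3 N^3 \alpha_j^3 \right) (f(x^0) - \bar f_{\min}) = \sG.
\]
The assumption $\sum_{t\geq 1} \alpha_t^3 < \infty$ ensures the exponent is finite, so $\sG$ is a well-defined finite positive constant and the uniform bound $f(x^{t-1}) - \bar f_{\min} \leq \sG$ holds for every $t \geq 1$.

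Next, I would plug this uniform bound back into the recursion of \Cref{lemma:descent}. The factor can be split as
\[
(1+2\sL^3 N^3 \alpha_t^3)(f(x^{t-1}) - \bar f_{\min}) = (f(x^{t-1}) - \bar f_{\min}) + 2\sL^3 N^3 \alpha_t^3 (f(x^{t-1}) - \bar f_{\min}) \leq (f(x^{t-1}) - \bar f_{\min}) + 2\sG \sL^3 N^3 \alpha_t^3,
\]
and substituting this into the recursion followed by cancelling $\bar f_{\min}$ on both sides yields exactly \eqref{eq:approximate descent}.

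I do not anticipate any serious obstacle here — the argument is a straightforward bootstrap. The only subtle point worth making explicit is that one must first extract the uniform bound on $f(x^{t-1}) - \bar f_{\min}$ before attempting to absorb the multiplicative factor into an additive $\mathcal{O}(\alpha_t^3)$ term; without the summability condition $\sum \alpha_t^3 < \infty$, the exponential product would blow up and no such constant $\sG$ would exist.
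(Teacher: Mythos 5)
Your proposal is correct and follows essentially the same route as the paper: drop the nonpositive terms (valid since $1-\sL N\alpha_t \geq 0$ under the step size condition), unroll the resulting recursion with $1+y\leq e^y$ to get the uniform bound $f(x^{t-1})-\bar f_{\min}\leq \sG$, and substitute back into the recursion of \Cref{lemma:descent}. No gaps.
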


\begin{proof}
First note that we have $1 - \sL N \alpha_t>0$ due to $\alpha_t \in (0,{1}/({\sqrt{2}{\sf L}N})]$. Thus, inequality \eqref{eq:esti-t0} in \cref{lemma:descent} implies
	\[
	f(x^t) - \bar f_{\min} \leq (1+ 2\sL^3 N^3 \alpha_t^3) \left( f(x^{t-1})- \bar f_{\min} \right)
	\]
	for all $t\geq 1$. Unrolling this recursion yields
	\[
	\begin{aligned}
		f(x^t) - \bar f_{\min} &\leq \left( f(x^{0})- \bar f_{\min} \right) {\prod}_{j=1}^{t} (1+ 2\sL^3 N^3 \alpha_j^3)\\
		&= \left( f(x^{0})- \bar f_{\min} \right) \exp\left( {\sum}_{j=1}^{t} \log\left(  1+ 2\sL^3 N^3 \alpha_j^3\right) \right)\\
		&\leq \left( f(x^{0})- \bar f_{\min} \right) \exp\left( {\sum}_{j=1}^{t}  2\sL^3 N^3 \alpha_j^3 \right), \quad \forall \ t\geq 1,
	\end{aligned}
	\]
	where the last inequality follows from the fact that $\log(1+a) \leq a$ whenever $a\geq 0$.
	Noticing $\sum_{j=1}^\infty \alpha_j^3 < \infty$, we obtain
	\be\label{eq:bound f}
	f(x^t) - \bar f_{\min} \leq \sG,  \quad \forall \ t\geq 1,
	\ee 
	where $\sG := ( f(x^{0})- \bar f_{\min}) \exp(\sum_{j=1}^{\infty}  2\sL^3 N^3 \alpha_j^3)$
	is a finite positive constant. Plugging \eqref{eq:bound f} into \eqref{eq:esti-t0} yields the desired result.
\end{proof}

The constant $\sG$ defined in \Cref{lemma:approx descent} can be large, if the value of the series $\sum_{j=1}^{\infty} \alpha_j^3$ is large. In \Cref{sec:step sizes}, we will provide a more explicit expression for $\sG$ by using some representative choices of the step sizes $\{\alpha_t\}_{t\geq 1}$.

Thanks to the recursion shown in \Cref{lemma:approx descent}, we can follow an elementary analysis (cf. \cite{BerTsi00,Ber16}) to verify weak convergence of \RRp. Our results in \cref{Prop:convergence} complement and extend the (mostly iteration complexity-type) results derived in \cite{mishchenko2020,nguyen2020unified}. 

\begin{proposition}
	\label{Prop:convergence}
	Suppose that condition \ref{A1} is satisfied.  Let $\{x^t\}_{t\geq 1}$ be generated by \cref{algo:gm-shuff} for solving problem \eqref{eq:problem} with step sizes $\{\alpha_t\}_{t\geq 1}$ fulfilling 
	\be\label{eq:ass-step}
	\alpha_t \in \left(0,\frac{1}{\sqrt{2}{\sf L}N}\right], \quad \sum_{t=1}^\infty \alpha_t = \infty, \quad \text{and} \quad \sum_{t=1}^\infty \alpha_t^3 < \infty. 
	\ee
	Then, $\{f(x^t)\}_{t\geq 1}$ converges to some $f^* \in \R$ and we have $\lim_{t\to\infty} \|\nabla{f}(x^t)\|=0$, i.e., every accumulation point of $\{x^t\}_{t\geq 1}$ is a stationary point of problem \eqref{eq:problem}.
\end{proposition}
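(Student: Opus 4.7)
Plan for the proof of \cref{Prop:convergence}: The argument has two pieces, convergence of the function values and vanishing of the gradient, and both are built directly on the approximate descent inequality established in \cref{lemma:approx descent}.

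For the first piece, I would introduce the Lyapunov surrogate
\[ U_t := f(x^t) + 2\sG \sL^3 N^3 \sum_{j=t+1}^\infty \alpha_j^3, \]
which is well-defined thanks to $\sum_t \alpha_t^3 < \infty$. Rearranging the bound \eqref{eq:approximate descent} and using that $1 - \sL N \alpha_t > 0$ (which is guaranteed by $\alpha_t \le 1/(\sqrt 2 \sL N)$), one obtains $U_t \le U_{t-1}$ for every $t$. Since $f$ is bounded below by $\bar f_{\min}$ and the tail sum is non-negative, $U_t$ is a monotone, bounded-below sequence, hence converges; as the tail vanishes, this gives $f(x^t) \to f^*$ for some $f^* \in \R$.

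For the second piece, I would telescope \eqref{eq:approximate descent} from $t=1$ to $T$. Using $\alpha_t \le 1/(\sqrt 2 \sL N)$ the coefficient of $\|x^t-x^{t-1}\|^2$ is bounded below by a positive constant times $1/\alpha_t$, and using step-one convergence together with $\sum \alpha_t^3 < \infty$ yields
\[ \sum_{t=1}^{\infty} \alpha_t \|\nabla f(x^{t-1})\|^2 < \infty \quad \text{and} \quad \sum_{t=1}^{\infty} \frac{\|x^t-x^{t-1}\|^2}{\alpha_t} < \infty. \]
Combined with $\sum_t \alpha_t = \infty$, the first summability gives $\liminf_{t\to\infty}\|\nabla f(x^{t-1})\| = 0$. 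The remaining task is to upgrade this liminf to a limit, which will be the main technical step.

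To do so, I would argue by contradiction: if $\limsup_t \|\nabla f(x^t)\| \ge 2\varepsilon > 0$, a standard sandwiching construction produces index pairs $m_k < n_k \to \infty$ with $\|\nabla f(x^{m_k-1})\| \ge 2\varepsilon$, $\|\nabla f(x^{n_k-1})\| < \varepsilon$, and $\|\nabla f(x^{t-1})\| \ge \varepsilon$ for $m_k \le t < n_k$, which by the summability above forces $\sum_{t=m_k}^{n_k-1} \alpha_t \to 0$. The Lipschitz continuity of $\nabla f$ then gives $\|\nabla f(x^{m_k-1}) - \nabla f(x^{n_k-1})\| \le \sL \sum_{t=m_k}^{n_k-1}\|x^t - x^{t-1}\|$, so I need a uniform bound $\|x^t - x^{t-1}\| \le C\alpha_t$. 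This is where I expect the main obstacle: but it is handled by reusing the inner-iterate estimate already derived in the proof of \cref{lemma:descent}, namely $V_t \le 4\sL N^3 \alpha_t^2(f(x^{t-1})-\bar f_{\min})$ together with the uniform bound $f(x^{t-1})-\bar f_{\min} \le \sG$ from \eqref{eq:bound f}. These imply $\|\tilde x_{i-1}^t - x^{t-1}\| = O(\alpha_t)$ and $\|\nabla f(x^{t-1},\sigma_i^t)\| \le \sqrt{2\sL \sG}$, hence $\|x^t - x^{t-1}\| \le C'N\alpha_t$. The contradiction $0 < \varepsilon \le \|\nabla f(x^{m_k-1}) - \nabla f(x^{n_k-1})\| \le \sL C' N \sum_{t=m_k}^{n_k-1}\alpha_t \to 0$ then closes the argument, yielding $\lim_{t\to\infty}\|\nabla f(x^t)\| = 0$ and showing every accumulation point is stationary.
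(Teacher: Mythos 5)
Your proposal is correct and follows essentially the same route as the paper: telescoping the approximate descent inequality to get $\sum_t \alpha_t\|\nabla f(x^{t-1})\|^2<\infty$, and upgrading $\liminf$ to $\lim$ by the sandwiching contradiction argument together with the bound $\|x^t-x^{t-1}\|=\mathcal O(\alpha_t)$ obtained from the inner-iterate estimate \eqref{eq:v_t} and the uniform bound \eqref{eq:bound f}. The only cosmetic difference is in the first step: you prove convergence of $\{f(x^t)\}$ via the explicit monotone Lyapunov sequence $U_t$ (the same $u_t$ device the paper deploys later in \cref{thm:finite sum}), whereas the paper simply invokes a supermartingale-type convergence result; both are valid.
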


\begin{proof}
	By \Cref{lemma:approx descent}, for all $t\geq 1$, we have 
	\be\label{eq:approximate descent repeat}
	f(x^t) \leq  f(x^{t-1})   - \frac{N\alpha_t}{2}\|\nabla{f}(x^{t-1})\|^2   -\frac{1 - \sL N \alpha_t}{2N \alpha_t}\left\| x^t-x^{t-1} \right\|^2 + 2\sG \sL^3 N^3  \alpha_t^3.
	\ee
	This shows that the sequence $\{f(x^t)\}_{t\geq 1}$ satisfies a supermartingale-type recursion. 
	\revise{Specifically, applying \Cref{lem:sup-conv-thm}} and using the lower boundedness of $f$ (as stated in assumption \ref{A1}) and the third condition in \eqref{eq:ass-step}, we can infer that $\{f(x^t)\}_{t\geq 1}$ converges to some $f^*\in \R$.
	
	Now,  telescoping the recursion \eqref{eq:approximate descent repeat}, we obtain
	\be \label{eq:esti-t2} 
	\frac{N}{2} \sum_{t=1}^\infty \alpha_t \|\nabla f( x^{t-1})\|^2 + \sum_{t=1}^\infty \frac{\|x^t-x^{t-1}\|^2}{8N\alpha_t} \leq f( x^0)  - f^* +  2\sG \sL^3 N^3 \sum_{t=1}^{\infty}\alpha_t^3  < \infty, 
	\ee
    where we applied $1-\sL N\alpha_t \geq \frac14$. The condition $\sum_{t=1}^\infty \alpha_t = \infty$ in \eqref{eq:ass-step} immediately implies that $\liminf_{t\to\infty}\|\nabla{f}(x^t)\|=0$.  In order to show that $\lim_{t\to\infty} \|\nabla{f}(x^t)\|=0$, let us assume on the contrary that $\{\|\nabla{f}(x^t)\|\}_{t\geq 1}$ does not converge to zero.  \revise{Then, there exists an $\varepsilon>0$ such that both of the conditions $	\|\nabla{f}({x}^{t})\|\geq 2\varepsilon$ and $\|\nabla{f}({x}^{t})\|< \varepsilon$ have to hold for infinitely many $t$. By mimicking the constructions used in \cite[Proposition 1]{BerTsi00} and \cite[Theorem 6.4.6]{conn2000trust},} we can extract two infinite subsequences $\{k_j\}_{j\geq 1}$ and $\{\ell_j\}_{j \geq 1}$ \revise{with $k_j<\ell_j < k_{j+1}$} such that 
	\be\label{eq:construct subsequence}
	\|\nabla{f}({x}^{k_j})\|\geq 2\varepsilon,\quad \|\nabla{f}({x}^{\ell_j})\|<\varepsilon,\quad\text{and}\quad \|\nabla{f}({x}^{t})\|\geq\varepsilon
	\ee
	%
 for all $t=k_j+1,\dots,\ell_j-1$. \revise{Note that we have assumed $\ell_j - k_j\geq2$ without loss of generality. Otherwise, we can just ignore the terms indexed by $t$ in \eqref{eq:construct subsequence}}. Combining this observation with \eqref{eq:esti-t2} yields
	\revise{	\[
		\infty>  {\sum}_{t=1}^{\infty}\alpha_{t+1}\|\nabla{f}(x^{t})\|^2 \geq  {\sum}_{j=1}^{\infty} {\sum}_{t=k_j}^{\ell_j-1}\alpha_{t+1} \| \nabla f(x^t)\|^2 \geq \varepsilon^2{\sum}_{j=1}^{\infty}{\sum}_{t=k_j}^{\ell_j-1}\alpha_{t+1},
		\]}
	which implies  
	\be\label{eq:contradict-1}
	\lim\limits_{j\rightarrow \infty} \ \beta_j := {\sum}_{t=k_j}^{\ell_j-1}\alpha_{t+1} = 0.
	\ee
Next, applying the triangle and Cauchy-Schwarz inequality, we obtain
\begin{align*}
	\|{x}^{\ell_j} - {x}^{k_j}\|  & \leq {\sum}_{t=k_j}^{\ell_j-1}\sqrt{\alpha_{t+1}} \left[\frac{\|{x}^{t+1}-x^{t}\|}{\sqrt{\alpha_{t+1}}}\right] \\ & \hspace{-4ex} \leq \left[ {\sum}_{t=k_j}^{\ell_j-1} \alpha_{t+1} \cdot {\sum}_{t=k_j}^{\ell_j-1} \frac{\|x^{t+1}-x^t\|^2}{\alpha_{t+1}} \right]^\frac12 \leq \sqrt{\beta_j} \cdot \left[ {\sum}_{t=1}^{\infty} \frac{\|x^{t}-x^{t-1}\|^2}{\alpha_{t}}\right]^\frac12. 
\end{align*}
On the one hand, upon taking the limit $j \rightarrow \infty$ in the above inequality, together with \eqref{eq:contradict-1} and \eqref{eq:esti-t2}, we have 
	\be \label{eq:contradict 1}
	\lim\limits_{j\rightarrow \infty} \ \|{x}^{\ell_j} - {x}^{k_j}\| = 0.
	\ee
On the other hand, combining \eqref{eq:construct subsequence}, the inverse triangle inequality, and the
	Lipschitz continuity of $\nabla{f}$, we have 
	\be \label{eq:contradict 2}
	\varepsilon\leq \left|\|\nabla{f}({x}^{\ell_j})\|-\|\nabla{f}({x}^{k_j})\| \right| \leq \|\nabla{f}({x}^{\ell_j})-\nabla{f}({x}^{k_j})\|\leq {\sf L}\|{x}^{\ell_j}-{x}^{k_j}\|.
	\ee
	We reach a contradiction to \eqref{eq:contradict 1} by taking $ j\rightarrow \infty$ in \eqref{eq:contradict 2}. Consequently, we  conclude that $\lim_{t\to\infty}\|\nabla{f}(x^t)\|=0$. The proof is complete.
\end{proof}
	
\subsection{Strong Limit-Point Convergence Under the Kurdyka-{\L}ojasiewicz  Inequality} \label{sec:sequential}
In this subsection, we establish one of the main results of this paper, i.e.,  the whole sequence $\{x^t\}_{t\geq1}$ generated by \Cref{algo:gm-shuff} for solving problem \eqref{eq:problem} converges to a stationary point of $f$. 

As direct consequences of \Cref{Prop:convergence}, we first collect several properties of the set of accumulation points $\mathcal C$ defined in \eqref{eq:limit point set}. 

\begin{lemma}\label{lemma:limit point set}
	Suppose that the conditions stated in \Cref{Prop:convergence} and assumption \ref{B1} are satisfied. Then, the following statements hold:
	\begin{enumerate}[label=(\alph*),topsep=0pt,itemsep=0ex,partopsep=0ex]
		\item  The set $\mathcal C$ is nonempty and compact. 
		\item  $\mathcal C \subseteq \mathrm{crit}(f) := \{x \in \Rn: \nabla f(x) = 0\}$. 
		\item  $f$ is finite and constant on $\mathcal C$. 
	\end{enumerate}
\end{lemma}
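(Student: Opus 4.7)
The plan is to obtain all three properties as essentially immediate consequences of \Cref{Prop:convergence} together with the boundedness assumption \ref{B1}; none of the steps is delicate.

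For part (a), nonemptiness of $\mathcal C$ follows from \ref{B1} and the Bolzano--Weierstrass theorem: since $\{x^t\}_{t\geq 1}$ is bounded, it admits a convergent subsequence, whose limit lies in $\mathcal C$. Closedness of $\mathcal C$ is already stated below \eqref{eq:limit point set} (and is a standard fact: the set of accumulation points of a sequence is closed). Boundedness of $\mathcal C$ follows since every accumulation point lies in the closure of $\{x^t\}_{t\geq 1}$, which is a bounded set under \ref{B1}. Combining closedness and boundedness in $\Rn$ gives compactness.

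For part (b), I would fix an arbitrary $\bar x \in \mathcal C$ and choose a subsequence $\{x^{t_\ell}\}_\ell$ with $x^{t_\ell} \to \bar x$. By \Cref{Prop:convergence} we have $\lim_{t\to\infty}\|\nabla f(x^t)\| = 0$, and by \ref{A1} the gradient $\nabla f$ is (Lipschitz) continuous. Passing to the limit along the subsequence, $\|\nabla f(\bar x)\| = \lim_\ell \|\nabla f(x^{t_\ell})\| = 0$, so $\bar x \in \mathrm{crit}(f)$.

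For part (c), \Cref{Prop:convergence} also asserts $f(x^t) \to f^*$ for some $f^* \in \R$. Using continuity of $f$ (again from \ref{A1}), for any $\bar x \in \mathcal C$ with defining subsequence $x^{t_\ell} \to \bar x$, we get $f(\bar x) = \lim_\ell f(x^{t_\ell}) = f^*$. Hence $f \equiv f^*$ on $\mathcal C$, which is both finite and constant. None of these steps poses any real obstacle; the only thing to double-check is that \ref{B1} is invoked for (a) whereas (b) and (c) ride entirely on the weak convergence result already established in \Cref{Prop:convergence} plus continuity.
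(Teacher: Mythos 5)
Your proposal is correct and follows essentially the same route as the paper: (a) from \ref{B1} via Bolzano--Weierstrass together with closedness and boundedness of $\mathcal C$, (b) from the weak convergence $\|\nabla f(x^t)\| \to 0$ in \Cref{Prop:convergence} plus continuity of $\nabla f$, and (c) from $f(x^t) \to f^*$ plus continuity of $f$. The paper states these steps more tersely; your write-up simply fills in the standard details.
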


\begin{proof} Part (a) is an immediate consequence of \ref{B1}. The argument (b) follows directly  from  \Cref{Prop:convergence}. Finally, by \Cref{Prop:convergence}, we have $\lim_{t\rightarrow \infty} f(x^t) = f^*$ for some $f^*\in \R$. The continuity of $f$ then readily implies part (c). 
\end{proof}

Mimicking \cite[Lemma 6]{BolSabTeb14}, the observations in \Cref{lemma:limit point set} allow us to establish a uniformized version of the KL inequality for the class $\mathcal Q_\eta$ of quasi-additive desingularizing functions. 

\begin{lemma}\label{lemma:uniform KL}
	Suppose the assumptions formulated in \Cref{lemma:limit point set} and condition \ref{B2} (or \ref{B3}) are valid. Then, there are $\delta, \eta > 0$ and $\varrho \in \mathcal Q_\eta$ (or $\varrho \in \mathcal L$) such that for all $ \bar x \in \mathcal C$ and $x \in U_{\delta,\eta} := \{x \in \Rn: \mathrm{dist}(x,\mathcal C) < \delta\} \cap \{x \in \Rn: 0 < |f(x) - f(\bar x)| < \eta\}$, we have
	\be \label{eq:uni-kl} 
	      \varrho^\prime(|f(x) - f(\bar x)|) \cdot \|\nabla f(x)\| \geq 1. 
	\ee
\end{lemma}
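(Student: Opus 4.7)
The plan is to follow the standard uniformization strategy as in \cite[Lemma 6]{BolSabTeb14}, exploiting two decisive features of the present setting: first, by \Cref{lemma:limit point set}(c), $f$ takes a single constant value $f^*$ on $\mathcal C$, so the quantity $|f(x)-f(\bar x)| = |f(x)-f^*|$ appearing in the local KL inequalities does not depend on the choice of $\bar x \in \mathcal C$; second, by \Cref{lemma:limit point set}(a), $\mathcal C$ is compact. These two observations allow one to patch together the local KL inequalities into a single global one.

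First, I would invoke \ref{B2} (respectively \ref{B3}) at every $\bar x \in \mathcal C$ to obtain a neighborhood $U_{\bar x}$, a threshold $\eta_{\bar x}>0$, and a desingularizing function $\varrho_{\bar x} \in \mathcal Q_{\eta_{\bar x}}$ (respectively $\varrho_{\bar x} \in \mathcal L$) on which the local KL inequality is satisfied. Compactness of $\mathcal C$ then produces a finite subcover $\{U_{\bar x_i}\}_{i=1}^{p}$ of $\mathcal C$. I set $\eta := \min_{1\leq i \leq p} \eta_{\bar x_i}$ and pick $\delta > 0$ small enough that $\{x : \mathrm{dist}(x,\mathcal C) < \delta\} \subset \bigcup_{i=1}^{p} U_{\bar x_i}$; such a $\delta$ exists because $\mathcal C$ is compact and is already covered by the open sets $U_{\bar x_i}$.

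The heart of the argument is to construct a single desingularizing function $\varrho$ that dominates all the local ones on $(0,\eta)$. For the Łojasiewicz case \ref{B3}, I would take $\varrho(r) := c\, r^{1-\theta}$ with $\theta := \max_i \theta_{\bar x_i}$ and a sufficiently large constant $c$, possibly after shrinking $\eta$; this works because $r^{-\theta}$ dominates $r^{-\theta_i}$ near $0$ whenever $\theta \geq \theta_i$. For the general case \ref{B2}, I would set $\varrho := \sum_{i=1}^{p} \varrho_{\bar x_i}$, which is concave, continuous, $C^1$ on $(0,\eta)$, vanishes at $0$, has strictly positive derivative, and trivially satisfies $\varrho' \geq \varrho'_{\bar x_i}$ pointwise. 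In both cases, \eqref{eq:uni-kl} is then immediate: given any $x \in U_{\delta,\eta}$, there exists $i$ with $x \in U_{\bar x_i}$; the local KL inequality yields $\varrho'_{\bar x_i}(|f(x)-f^*|)\,\|\nabla f(x)\|\geq 1$, and combining this with the pointwise domination $\varrho' \geq \varrho'_{\bar x_i}$ and the identity $f(\bar x)=f^*$ for every $\bar x\in\mathcal C$ finishes the argument.

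The main technical obstacle lies in verifying that the global $\varrho$ actually belongs to the prescribed class. For the $\mathcal L$ case this is by construction. For the $\mathcal Q_\eta$ case, one has to show $\sum_{i=1}^p \varrho_{\bar x_i} \in \mathcal Q_\eta$, and a naive bound fails because quasi-additivity of a sum is not a consequence of quasi-additivity of the summands alone. My plan is to exploit the concavity of each $\varrho_{\bar x_i}$, which forces $\varrho'_{\bar x_i}$ to be non-increasing, so that by symmetry one may assume $x \leq y$ and hence $\varrho'_{\bar x_i}(x) \geq \varrho'_{\bar x_i}(y) > 0$ for every $i$. Rewriting the individual quasi-additivity condition as $\varrho'_{\bar x_i}(x+y) \geq C_{\bar x_i}^{-1}\, \varrho'_{\bar x_i}(x)\varrho'_{\bar x_i}(y)/[\varrho'_{\bar x_i}(x)+\varrho'_{\bar x_i}(y)]$ and using the elementary estimate $ab/(a+b) \geq b/2$ whenever $a\geq b > 0$, summation over $i$ should yield $\varrho'(x+y) \geq (2\max_i C_{\bar x_i})^{-1}\, \varrho'(x)\varrho'(y)/[\varrho'(x)+\varrho'(y)]$, i.e., the quasi-additive property with constant $C_\varrho = 2\max_i C_{\bar x_i}$.
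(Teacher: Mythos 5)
Your argument is correct and follows essentially the same route as the paper's proof: cover the compact set $\mathcal C$ (on which $f \equiv f^*$ by \Cref{lemma:limit point set}) by finitely many KL neighborhoods, take $\eta$ as the minimum threshold and $\delta$ from the covering, use the sum $\sum_i \varrho_i$ as the uniform desingularizing function in the $\mathcal Q_\eta$ case, and verify its quasi-additivity through the monotonicity of the $\varrho_i^\prime$. Your termwise bound $\varrho_i^\prime(x+y) \geq \varrho_i^\prime(y)/(2C_i)$ even yields a marginally better constant ($2\max_i C_i$ versus the paper's $2m\max_i C_i$, which is obtained by passing through a single maximizing index), and your explicit treatment of the {\L}ojasiewicz case via the maximal exponent and an enlarged constant (after shrinking $\eta$ below $1$) spells out a detail the paper's written proof leaves implicit.
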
 

A detailed derivation of \cref{lemma:uniform KL} is presented in \cref{app:sec-unikl}. In the sequel, whenever we say that assumption \ref{B2} (or \ref{B3}) is satisfied, then we mean that it has to hold for the uniformized desingularizing function $\varrho$ in \eqref{eq:uni-kl}.

The standard convergence analysis based on the KL inequality applies to algorithms with a sufficient decrease property. However, the recursion shown in \Cref{lemma:approx descent} does not necessarily imply a descent property due to the error term $2\sG{\sf L}^3N^3\alpha_t^3$, rendering the standard KL analysis not directly applicable here.  
Fortunately, \Cref{lemma:approx descent} reveals an \emph{approximate descent property} of \Cref{algo:gm-shuff}. Based on this property, we derive a novel auxiliary descent-type condition (\eqref{eq:ut}-\eqref{eq:descent}) for the iterates which allows to express `descent' of the algorithm in an alternative way. We can then combine the standard KL analysis framework, e.g., \cite[Theorem 1]{BolSabTeb14}, and the dynamics of the diminishing step sizes to establish strong limit-point convergence results for \RRp.

We now present one of our main results in the following theorem. 
\begin{theorem}
	\label{thm:finite sum}
	Suppose that the assumptions \ref{A1}, \ref{B1}, and \ref{B2} are satisfied and let $\{x^t\}_{t \geq 1}$ be generated by \Cref{algo:gm-shuff}. In addition, let us assume  the following conditions on the step sizes  $\{\alpha_t\}_{t\geq 1}$: 
	\be\label{eq:kl-step}  
	\begin{split}
		{
			\alpha_t \in (0,{1} }&{/{(\sqrt{2}{\sf L}N)}],} \quad 
		{\sum}_{t=1}^\infty \alpha_t = \infty, \quad {\sum}_{t=1}^\infty \alpha_t^3 < \infty, \\ 
		& \text{and} \quad {\sum}_{t=1}^\infty \alpha_t \left[\varrho^\prime\left({\sum}_{j=t}^\infty \alpha_j^3\right)\right]^{-1} < \infty,
	\end{split}
	\ee
	where $\varrho \in \mathcal Q_\eta$ is the desingularizing function used in the uniformized KL inequality \eqref{eq:uni-kl}. Then, we have
	\begin{align*}
		{\sum}_{t=1}^{\infty}\alpha_t\|\nabla{f}(x^{t-1})\| <\infty  \quad\text{and} \quad {\sum}_{t=1}^{\infty}\|x^t-x^{t-1}\|<\infty.
	\end{align*}
	Consequently, the sequence $\{x^t\}_{t\geq 1}$ has finite length and converges to some stationary point $x^*$ of $f$. 
\end{theorem}

\begin{proof}
	For convenience, we repeat the recursion shown in \Cref{lemma:approx descent}. For all $t\geq 1$, we have 
	\be\label{eq:approximate descent repeat 2}
	f(x^t) \leq  f(x^{t-1})   - \frac{N\alpha_t}{2}\|\nabla{f}(x^{t-1})\|^2   -\frac{1 - \sL N \alpha_t}{2N \alpha_t}\left\| x^t-x^{t-1} \right\|^2 + 2\sG \sL^3 N^3  \alpha_t^3.
	\ee
	Let us define the accumulation of the error terms as
	\be\label{eq:ut}
	u_t = 2\sG \sL^3 N^3  {\sum}_{j=t+1}^{\infty}\alpha_{j}^3.
	\ee
	Upon plugging \eqref{eq:ut} into \eqref{eq:approximate descent repeat 2}, it holds that
	\begin{align}
		\label{eq:descent}
		f(x^t)+u_t\leq f(x^{t-1})+u_{t-1}-\frac{N\alpha_t}{2}\|\nabla{f}(x^{t-1})\|^2-\frac{1-{\sf L}N\alpha_t}{2N\alpha_t}\|x^t-x^{t-1}\|^2.
	\end{align}
	Using $\alpha_t \in (0,{1}/{(\sqrt{2}{\sf L}N)}]$, it holds that $1-{\sf L}N\alpha_t>0$ for all $t\geq 1$. Then, \eqref{eq:descent}  implies that the sequence $\{f(x^t)+u_t\}_{t\geq 1}$ is non-increasing.  Note that $f(x^t) \rightarrow f^* = f(\bar x)$  for all $\bar x \in \mathcal C$ as shown in \Cref{Prop:convergence} and \Cref{lemma:limit point set}.   Since $u_t\rightarrow 0$  by \eqref{eq:kl-step}, we have that the sequence $\{f(x^t)+u_t\}_{t\geq 1}$ is also convergent and converges to $f^*$. Let us set 
	\[
	\delta_t := \varrho(f(x^t) - f^* + u_t).
	\]
	Due to the monotonicity of the sequence $\{f(x^t)+u_t\}_{t\geq 1}$ and the fact that $f(x^t)+u_t \rightarrow f^*$,  $\delta_t$ is well defined as $f(x^{t}) - f^* + u_{t} \geq 0$ for all $t\geq 1$.  Then, for all $t$ sufficiently large  we obtain 
	\begin{align}
		\nonumber \delta_{t-1} - \delta_{t} & \geq \varrho^\prime(f(x^{t-1}) - f^* + u_{t-1}) \left [f(x^{t-1}) + u_{t-1} - f(x^{t}) - u_t \right] \\ 
		\nonumber & \geq \varrho^\prime(|f(x^{t-1}) - f^*| + u_{t-1}) \left [f(x^{t-1}) + u_{t-1} - f(x^{t}) - u_t \right]  \\
		\nonumber &\geq \varrho^\prime(|f(x^{t-1}) - f^*| + u_{t-1}) \left [  \frac{N\alpha_t}{2}\|\nabla{f}(x^{t-1})\|^2+\frac{1-{\sf L}N\alpha_t}{2N\alpha_t}\|x^t-x^{t-1}\|^2\right]\\
		&\geq \frac{1}{C_\rho}  \frac{\frac{N\alpha_t}{2}\|\nabla{f}(x^{t-1})\|^2+\frac{1-{\sf L}N\alpha_t}{2N\alpha_t}\|x^t-x^{t-1}\|^2}{[\varrho^\prime(|f(x^{t-1}) - f^*|)]^{-1} + [\varrho^\prime(u_{t-1})]^{-1}}, \label{eq:est-1}
	\end{align}
	where the first inequality is from the concavity of $\varrho$, the second inequality is due to the fact that $\varrho^\prime$ is monotonically decreasing (since $\varrho$ is concave),  the third inequality is from \eqref{eq:descent}, and the last inequality follows from \eqref{eq:qrho} (since $\varrho \in \mathcal Q_\eta$). 
	
Since $\{f(x^t)\}_{t\geq 1}$ converges to $f^*$ and we have $\dist(x^t,\mathcal C) \to 0$ by definition (cf.\ \eqref{eq:limit point set}), there exists $t_0 \in \N$ such that the uniformized KL inequality \eqref{eq:uni-kl} holds for all $x \equiv x^t \notin \mathcal C$ with $t \geq t_0-1$. In the following, without loss of generality, let us assume $x^t \notin \mathcal C$ or $u_t \neq 0$ for all $t \geq t_0-1$. Thus, applying the KL inequality \eqref{eq:uni-kl}  to \eqref{eq:est-1} yields 
\be \label{eq:est-3}
\begin{aligned} \delta_{t-1} - \delta_{t} \geq \frac{1}{C_\rho} \frac{\frac{N\alpha_t}{2}\|\nabla{f}(x^{t-1})\|^2+\frac{1-{\sf L}N\alpha_t}{2N\alpha_t}\|x^t-x^{t-1}\|^2}{\|\nabla f(x^{t-1})\| + [\varrho^\prime(u_{t-1})]^{-1}} \quad \forall~t \geq t_0. \end{aligned}
\ee
Then, for every $\vartheta \in [0,1)$ and all $t\geq t_0$, we have the following chain of inequalities:
\begingroup
\allowdisplaybreaks
\begin{align} 
	\nonumber \frac{\sqrt{N\alpha_t}}{2} \|\nabla f(x^{t-1})\| + \frac{\sqrt{1-{\sf L}N\alpha_t}}{2\sqrt{N\alpha_t}} \|x^t - x^{t-1}\| & \\
	\nonumber & \hspace{-40ex} \leq \left[ \frac{N\alpha_t}{2}\|\nabla{f}(x^{t-1})\|^2+\frac{1-{\sf L}N\alpha_t}{2N\alpha_t}\|x^t-x^{t-1}\|^2 \right]^{1/2} \\ 
	\nonumber & \hspace{-40ex} \leq \sqrt{C_\rho [\delta_{t-1}-\delta_t] [\|\nabla f(x^{t-1})\| + [\varrho^\prime(u_{t-1})]^{-1}]} \\ 
	& \hspace{-40ex} \leq \frac{C_\rho}{2\sqrt{N\alpha_t}(1-\vartheta)}[\delta_{t-1} - \delta_t] + \frac{\sqrt{N\alpha_t}(1-\vartheta)}{2} [\|\nabla f(x^{t-1})\| + [\varrho^\prime(u_{t-1})]^{-1}], \label{eq:est-4}
\end{align}
\endgroup
where the first inequality is from  the estimate $(a+b)^2 \leq 2(a^2 + b^2)$, the second inequality is due to \eqref{eq:est-3},  and the last inequality follows from the estimate $\sqrt{ab} \leq \frac{1}{2\varepsilon}a+\frac{\varepsilon}{2}b$ for any $a,b \geq 0$ and $\varepsilon > 0$. 

Multiplying both sides of \eqref{eq:est-4} with $2\sqrt{N\alpha_t}$ and rearranging the terms gives 
\be\label{eq:est-5} \begin{aligned}
N\alpha_t \vartheta \|\nabla f(x^{t-1})\| + \sqrt{1-{\sf L}N\alpha_t} \|x^t - x^{t-1}\| & \\ & \hspace{-25ex} \leq \frac{C_\rho}{1-\vartheta} [\delta_{t-1}-\delta_t] + N\alpha_t (1-\vartheta) [\varrho^\prime(u_{t-1})]^{-1}. \end{aligned}
\ee
for all $t \geq t_0$. Due to $\alpha_t \leq 1/(\sqrt{2}{\sf L} N) $, there further exists $\bar{\vartheta} > 0$ with $\sqrt{1-{\sf L}N\alpha_t} \geq \bar{\vartheta}$ for all $t \geq t_0$. Finally, summing the inequality \eqref{eq:est-5} over $t = t_0+1,\dots,T$ yields
\begingroup
\allowdisplaybreaks
\begin{align} \nonumber {\sum}_{t=t_0+1}^{T} \left[ N\vartheta \alpha_{t} \|\nabla f(x^{t-1})\| + \bar\vartheta \|x^{t}-x^{t-1}\|\right] & \\ \label{eq:festi-kl} & \hspace{-25ex}\leq \frac{C_\rho[\delta_{t_0}-\delta_T]}{1-\vartheta} + N(1-\vartheta) {\sum}_{t=t_0+1}^{T} \alpha_{t} [\varrho^\prime(u_{t-1})]^{-1}. \end{align} \endgroup
Since $\varrho$ is continuous with $\varrho(0) = 0$, we have $\delta_T \to 0$ as $T \to \infty$. Thus, taking the limit $T \to \infty$ in \eqref{eq:festi-kl} and invoking \eqref{eq:kl-step}, we obtain
\begin{align*}
	{\sum}_{t=1}^{\infty}\alpha_t\|\nabla{f}(x^{t-1})\| <\infty  \quad\text{and} \quad {\sum}_{t=1}^{\infty}\|x^t-x^{t-1}\|<\infty.
\end{align*}
By definition, the second estimate implies that $\{x^t\}_{t\geq 1}$ has finite length, and hence is convergent. This, together with the result that every accumulation point of  $\{x^t\}_{t\geq 1}$ is a stationary point of $f$ as shown in \Cref{Prop:convergence}, yields that $\{x^t\}_{t\geq 1}$ converges to some stationary point $x^*$ of $f$. 
\end{proof}

As can be observed from the statements of \Cref{lemma:descent}, \Cref{lemma:approx descent}, \Cref{Prop:convergence}, and \Cref{thm:finite sum}, we impose an increasing number of conditions on the step sizes $\{\alpha_{t}\}_{t\geq 1}$ in order to obtain stronger convergence results. Though the constant $\sG$ in \cref{lemma:approx descent} and the last condition in \eqref{eq:kl-step} are less intuitive at this stage, we will carefully discuss the choices of the step sizes  $\{\alpha_{t}\}_{t\geq 1}$ in the coming subsection. Since our results are primarily of asymptotic nature, we note that the assumption $\alpha_t \in (0,1/(\sqrt{2}{\sL}N)]$, $t \in \N$ can be relaxed and only needs be satisfied for all $t \geq t_1$ and some $t_1 \in \N$. The convergence properties derived in \Cref{Prop:convergence} and \Cref{thm:finite sum} still hold if we impose such a weaker condition on the step sizes $\{\alpha_{t}\}_{t\geq 1}$. We will exploit this straightforward observation in the next subsections. 


We close the current subsection by commenting on the slightly stronger variant of the KL inequality we utilized; see \Cref{Def:KL-property} and the discussion below it for details.  The absolute value in this variant (see \eqref{eq:kl-ineq}) is crucial for our analysis. To illustrate this observation, let us assume that we use the classical KL inequality, i.e., without taking the absolute value of ``$f(x) - f^*$''. The non-descent nature of \RR implies that the term $f(x^t) - f^*$ is not necessarily non-negative for all $t\geq 0$. Recall that the desingularizing function $\varrho$ is defined on $[0,\eta)$ for some $\eta>0$. Therefore, in this case, the last inequality in \eqref{eq:est-1} and its subsequent derivations do not hold since $\varrho'(f(x^{t-1}) - f^*)$ is not well defined. Furthermore, the stronger structural condition ``$\varrho \in \mathcal Q_\eta$'' on the desingularizing function $\varrho$ is utilized in \eqref{eq:est-1} to separate the accumulated error term $u_{t-1}$ and the function value $|f(x^{t-1})-f^*|$. In this way, the conventional KL inequality for $f$ can be applied and we can avoid imposing stringent and uncertain KL assumptions for the auxiliary terms ``$f(x^{t-1})-f^*+u_{t-1}$''.

\subsection{The Choice of Diminishing Step Sizes}\label{sec:step sizes}

In this subsection, we discuss the possible choices of the step sizes $\{\alpha_t\}_{t\geq 1}$ in \Cref{algo:gm-shuff}. Using these choices, we will also examine the constant $\sG$ defined in \cref{lemma:approx descent} and the conditions in \eqref{eq:kl-step}.

It is well-known that the non-descent nature of \RR can prevent it from converging to a stationary point if a constant step size rule is used. One of the key ingredients of our convergence analysis for \RR is the utilization of proper diminishing step sizes $\{\alpha_t\}_{t\geq 1}$, which satisfy the third condition in \eqref{eq:kl-step}.  In this subsection, we discuss the following very popular diminishing step size rule \revise{\cite{robbins1951stochastic,chung1954stochastic,bottou2018optimization,nguyen2020unified}}:
\be\label{eq:step size}
\alpha_t = \frac{\alpha}{(t+\beta)^\gamma}, \quad t = 1,2,\ldots
\ee
where $\alpha>0$, $\beta\geq 0$, and $\gamma>0$ are preset parameters. 

With such a diminishing step size rule, the first three conditions in  \eqref{eq:kl-step} can be ensured as long as 
\be \label{eq:something}
t \geq (\sqrt{2}{\sL}N\alpha)^\frac{1}{\gamma}-\beta \quad \text{and} \quad \gamma \in \left( 1/3, 1\right]. 
\ee

Before verifying the last condition in \eqref{eq:kl-step}, let us give a more explicit bound for $\sG$. Note that the two most popular choices of $\gamma$ in the diminishing step size rule \eqref{eq:step size} are $\gamma = {1}/{2}$ and $\gamma = 1$. For simplicity, let us set $\alpha = {1}/{(\sqrt{2}\sL N)}$ and $\beta = 0$. If we choose $\gamma = 1/2$, then we have $\sum_{t=1}^{\infty}  2\sL^3 N^3 \alpha_t^3 = \frac{\sqrt{2}}{2}  \sum_{t=1}^{\infty} \frac{1}{t^{3/2}} \approx  1.85$, 
which gives $\sG \leq 7 ( f(x^{0})- \bar f_{\min})$. In the case $\gamma = 1$, it follows $\sum_{t=1}^{\infty}  2\sL^3 N^3 \alpha_t^3 = \frac{\sqrt{2}}{2}  \sum_{t=1}^{\infty} \frac{1}{t^{3}} \approx  0.85$, 
which  implies $\sG \leq 3( f(x^{0})- \bar f_{\min})$.
It can be seen that $\sG$ typically has a moderate relation to the initial function value gap for both of the two popular choices of $\gamma$.

With an additional requirement  on $\gamma$,  the following lemma clarifies the last condition in \eqref{eq:kl-step} if $\varrho$ is selected from $\mathcal L$.  

\begin{lemma}\label{lemma:step size}
	Let \revise{$\theta \in [0,1)$} be given and let $\{\alpha_t\}_{t\geq 1}$ be defined according to
	\[
	\alpha_t = \frac{\alpha}{(t+\beta)^\gamma}, \quad \alpha > 0, \quad \beta \geq 0, \quad \gamma \in \left(\frac{1+\theta}{1+3\theta},1\right]. 
	\]
	Then, for all $k \geq 1$, we have
	\be\label{eq:esti-eps} 
	\frac{\underline{a}_\theta}{(k+\beta)^{(1+3\theta)\gamma-(1+\theta)}} \leq {\sum}_{t=k}^\infty \alpha_t \left[{\sum}_{j=t}^\infty\alpha_j^3\right]^{\theta} \leq \frac{\bar{a}_\theta}{(k+\beta)^{(1+3\theta)\gamma-(1+\theta)}},
	\ee 
	where $\underline{a}_\theta, \bar{a}_\theta>0$  are two positive numerical constants.
\end{lemma}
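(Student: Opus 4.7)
The plan is to establish the two-sided estimate by performing two successive integral comparisons: one for the inner tail $\sum_{j=t}^\infty \alpha_j^3$ and one for the resulting outer sum. First, the hypothesis $\gamma > (1+\theta)/(1+3\theta)$ implies $\gamma > 1/3$ (since $(1+\theta)/(1+3\theta) > 1/3$ reduces to $3 > 1$), so $3\gamma > 1$ and the inner tail $\alpha^3 \sum_{j=t}^\infty (j+\beta)^{-3\gamma}$ converges. The standard decreasing-integrand bounds
\[
\int_t^\infty (x+\beta)^{-3\gamma}\,dx \;\le\; \sum_{j=t}^\infty (j+\beta)^{-3\gamma} \;\le\; (t+\beta)^{-3\gamma} + \int_t^\infty (x+\beta)^{-3\gamma}\,dx
\]
then give
\[
\frac{\alpha^3}{3\gamma-1}\,(t+\beta)^{1-3\gamma} \;\le\; \sum_{j=t}^\infty \alpha_j^3 \;\le\; c_\gamma\,(t+\beta)^{1-3\gamma}
\]
for an explicit constant $c_\gamma>0$ depending only on $\alpha$ and $\gamma$; the isolated boundary term $(t+\beta)^{-3\gamma}$ is absorbed into the leading term since $-3\gamma < 1-3\gamma$.

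Next, raising both sides to the $\theta$ power and multiplying by $\alpha_t = \alpha(t+\beta)^{-\gamma}$ produces
\[
\underline{b}_\theta\,(t+\beta)^{\theta-\gamma(1+3\theta)} \;\le\; \alpha_t\Bigl[{\sum}_{j=t}^\infty \alpha_j^3\Bigr]^{\theta} \;\le\; \bar b_\theta\,(t+\beta)^{\theta-\gamma(1+3\theta)}
\]
for explicit positive constants $\underline{b}_\theta,\bar{b}_\theta$. Crucially, the hypothesis $\gamma > (1+\theta)/(1+3\theta)$ is exactly $\gamma(1+3\theta)-\theta > 1$, so the exponent $\theta-\gamma(1+3\theta)$ is strictly less than $-1$ and the outer sum converges. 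A second integral comparison applied to $\sum_{t=k}^\infty (t+\beta)^{\theta-\gamma(1+3\theta)}$ then yields matching two-sided bounds proportional to $(k+\beta)^{1+\theta-\gamma(1+3\theta)} = (k+\beta)^{-[(1+3\theta)\gamma-(1+\theta)]}$, and tracking the accumulated prefactors gives the desired constants $\underline{a}_\theta$ and $\bar{a}_\theta$.

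No serious obstacle is anticipated; the argument is routine bookkeeping for integral comparisons of power-law tails. The only mildly delicate point is ensuring that the upper bound on the inner tail does not introduce a boundary term with a strictly worse exponent: the decreasing-integrand inequality $\sum_{j\ge t} h(j) \le h(t) + \int_t^\infty h(x)\,dx$ applied with $h(x)=(x+\beta)^{-3\gamma}$ produces an extra summand $(t+\beta)^{-3\gamma}$, which must be absorbed into the leading $(t+\beta)^{1-3\gamma}$ term using $3\gamma > 1$. Handling this carefully is what forces both sides of every intermediate estimate to share the same polynomial rate, so that in the end only the prefactors differ.
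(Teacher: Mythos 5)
Your proposal is correct and follows essentially the same route as the paper: two successive integral-comparison estimates for the power-law tails, first for $\sum_{j\ge t}\alpha_j^3$ and then for the outer sum, with the hypothesis $\gamma>(1+\theta)/(1+3\theta)$ entering exactly as $(1+3\theta)\gamma-\theta>1$. The only (harmless) difference is bookkeeping: you absorb the boundary terms into the leading power using $t+\beta\ge 1$ before raising to the $\theta$ power, whereas the paper keeps them separate via the subadditivity of $x\mapsto x^\theta$ and sums the extra $\alpha_t^{1+3\theta}$ piece on its own before collecting all prefactors into $\bar a_\theta$.
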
 

The proof of \cref{lemma:step size} is given in \cref{app:sec-pf-step}.  \cref{lemma:step size} can be utilized to provide tight lower and upper bounds on $\sum_{t=k}^\infty \alpha_t [\varrho^\prime(u_{t-1})]^{-1}$ for any $k\geq 1$ if $\varrho \in \mathcal L$ is a {\L}ojasiewicz function. More specifically, based on \eqref{eq:step size}, \cref{thm:finite sum},  and \cref{lemma:step size}, we can formulate the following general convergence result. 

\begin{corollary} \label{cor:conv} Suppose that the assumptions \ref{A1}, \ref{B1}, and \ref{B3} are satisfied and let the sequence $\{x^t\}_{t\geq 1}$ be generated by \Cref{algo:gm-shuff}. Let us further consider the following family of step size strategies:
	\[ \alpha_t = \frac{\alpha}{(t+\beta)^\gamma}, \quad \alpha > 0, \quad \beta \geq 0, \quad \gamma \in \left(\frac{1}{2},1\right]. \]
	Then, $\{x^t\}_{t\geq 1}$ has finite length and converges to some stationary point $x^*$ of $f$. 
\end{corollary}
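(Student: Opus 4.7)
The plan is to verify each of the four step-size conditions in \cref{eq:kl-step} for $\alpha_t = \alpha/(t+\beta)^\gamma$ with $\gamma \in (\tfrac{1}{2},1]$ and then invoke \cref{thm:finite sum}. The first three are routine: $\alpha_t \leq 1/(\sqrt{2}\sL N)$ holds for all $t$ sufficiently large, which suffices by the remark following \cref{thm:finite sum}; $\sum_t \alpha_t = \infty$ since $\gamma \leq 1$; and $\sum_t \alpha_t^3 < \infty$ since $3\gamma > \tfrac{3}{2} > 1$.

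The fourth condition $\sum_t \alpha_t [\varrho'(\sum_{j=t}^\infty \alpha_j^3)]^{-1} < \infty$ requires more care. Invoking \cref{lemma:uniform KL} under~\ref{B3} furnishes a uniformized desingularizer $\varrho_0 \in \mathcal{L}$ of the form $\varrho_0(x) = c_0 x^{1-\bar\theta}$ for some exponent $\bar\theta \in [0,1)$ and constant $c_0 > 0$. The map $\theta \mapsto (1+\theta)/(1+3\theta)$ is strictly decreasing on $[0,1)$ with limit $\tfrac{1}{2}$ as $\theta \uparrow 1$, so the assumption $\gamma > \tfrac{1}{2}$ lets me pick $\theta \in [\bar\theta,1)$ for which $(1+\theta)/(1+3\theta) < \gamma$. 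I would then upgrade the KL inequality to exponent $\theta$: shrinking the uniformization parameter $\eta$ to $\min(\eta,1)$ ensures $|f(x)-f(\bar x)|^{\bar\theta} \geq |f(x)-f(\bar x)|^\theta$ on the relevant neighborhood, so \cref{eq:uni-kl} persists with $\varrho(x) = c\, x^{1-\theta}$ for an appropriate $c > 0$. This is the standard ``monotonicity of KL in the exponent'' step.

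With this upgraded $\varrho$ we have $[\varrho'(u)]^{-1} = u^\theta /[c(1-\theta)]$, so substituting $u_{t-1} = 2\sG \sL^3 N^3 \sum_{j=t}^\infty \alpha_j^3$ yields
\[
\sum_{t=1}^\infty \alpha_t [\varrho'(u_{t-1})]^{-1} = \frac{(2\sG \sL^3 N^3)^\theta}{c(1-\theta)}\, \sum_{t=1}^\infty \alpha_t \left[\sum_{j=t}^\infty \alpha_j^3\right]^{\!\theta},
\]
which is finite by \cref{lemma:step size} applied at $k=1$ (the displayed upper bound is a finite constant precisely because $\gamma > (1+\theta)/(1+3\theta)$). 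All four hypotheses of \cref{thm:finite sum} are thereby met, delivering the finite-length property and convergence of $\{x^t\}_{t\geq 1}$ to a stationary point of $f$.

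I expect the exponent-upgrade step to be the only non-routine point: one must reconcile the intrinsic KL exponent supplied by~\ref{B3} with the step-size-dependent admissibility range of \cref{lemma:step size}, and it is only the joint monotonicity of the Łojasiewicz inequality in $\theta$ and of the threshold $(1+\theta)/(1+3\theta)$ that makes the uniform range $\gamma \in (\tfrac{1}{2},1]$ work for every $\bar\theta \in [0,1)$.
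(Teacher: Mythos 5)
Your proposal is correct and follows essentially the same route as the paper: invoke the uniformized Łojasiewicz desingularizer from \cref{lemma:uniform KL}, exploit the monotonicity of the KL inequality in the exponent (after shrinking $\eta$ below $1$) to raise the exponent until $\gamma > (1+\theta)/(1+3\theta)$, verify the fourth step-size condition via \cref{lemma:step size}, and apply \cref{thm:finite sum}. The only cosmetic difference is that you verify the first three conditions of \cref{eq:kl-step} explicitly, whereas the paper cites \cref{eq:something}; the substance of the argument is identical.
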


\begin{proof} According to \cref{lemma:uniform KL}, the mapping $f$ satisfies the KL inequality \eqref{eq:uni-kl} with a uniformized desingularizing function $\varrho \in \mathcal L$ with $\varrho(x) = c x^{1-\theta}$, $c > 0$, and $\theta \in [0,1)$. This yields $[\varrho^\prime(x)]^{-1} = x^\theta / (c(1-\theta))$ and hence, due to \eqref{eq:something} and \cref{lemma:step size} all conditions in \eqref{eq:kl-step} are satisfied for all $t$ sufficiently large if $\gamma \in ((1+\theta)/(1+3\theta),1]$. Since the KL inequality \eqref{eq:uni-kl} also holds for every desingularizing function $\bar\varrho(x) := c(1-\theta) x^{1-\bar\theta}/(1-\bar\theta)$ with 
	\[ \bar \theta \in [0,1) \quad \text{and} \quad  \bar \theta \geq \theta, \]
	(after potentially decreasing $\eta$ to $\eta < 1$) we can choose $\bar \theta$ sufficiently close to $1$ such that $\gamma \in ((1+\bar\theta)/(1+3\bar\theta),1]$ in \Cref{lemma:step size} can be relaxed to $\gamma \in (\frac12,1]$. Thus, in this situation \cref{thm:finite sum} is applicable which finishes the proof of \cref{cor:conv}. 
\end{proof}

\subsection{Convergence Rate Under the {\L}ojasiewicz Inequality} \label{sec:rate}
We have shown that the sequence $\{x^t\}_{t\geq 1}$ generated by \Cref{algo:gm-shuff} will converge to a stationary point $x^*$ of $f$. As illustrated in the last subsection, when the desingularizing functions $\varrho$ can be chosen from the class of {\L}ojasiewicz functions $\mathcal L$, then this convergence can be guaranteed for a large family of step sizes $\alpha_t = \mathcal O(1/t^\gamma)$ with $\gamma \in (\frac12,1]$. In such case, it is also typically possible to obtain the convergence rates of descent algorithms by applying the standard KL inequality-based analysis framework; see, e.g., \cite[Theorem 2]{AttBol09}.   In this subsection, our goal is to establish the convergence rate of \RRp, relying on the {\L}ojasiewicz inequality and the corresponding KL exponent $\theta$ of the stationary point $x^*$; see \eqref{eq:Lojasiewicz ineq} for a concrete definition.

Throughout this subsection,  the  desingularizing function $\varrho$  is a {\L}ojasiewicz function selected from $\mathcal L$ (see \eqref{eq:Lojasiewicz function}), i.e., it  satisfies $\varrho(x) = cx^{1-\theta}$, where  $c>0$ and $\theta \in [0,1)$. Then, the KL inequality \eqref{eq:kl-ineq} and its uniformized version \eqref{eq:uni-kl} simplify to the following {{\L}ojasiewicz inequality}:
\be\label{eq:Lojasiewicz ineq}
|f(x)-f^*|^\theta \leq c(1-\theta) \|\nabla{f}(x)\|,
\ee
where $\theta$ is the KL exponent of $f$ at $x^*$. 

Before presenting our convergence rate analysis, let us restate two results that have been shown in \cite[Lemma 4 and 5]{Pol87}, which are crucial to our analysis. 
\begin{lemma} \label{lemma:rate} Let $\{z_k\}_{k\geq 1}$ be a non-negative sequence and let $b \geq 0$, $d, p, q > 0$, $s \in (0,1)$, and $\tau > s$ be given constants. 
	\begin{enumerate} [label=(\alph*),topsep=0pt,itemsep=0ex,partopsep=0ex]
		\item Suppose that the sequence $\{z_k\}_{k\geq 1}$ satisfies 
		\[ 
		z_{k+1} \leq \left(1- \frac{q}{k+b} \right) z_k + \frac{d}{(k+b)^{p+1}}, \quad \forall \ k\geq 1.
		\]
		Then, if $q > p$, it holds that $ z_k \leq \frac{d}{q-p}\cdot (k+b)^{-p} + o((k+b)^{-p})$ for all sufficiently large $k$. 
		\item Suppose that the sequence $\{z_k\}_{k\geq 1}$ satisfies 
		\[ z_{k+1} \leq \left(1- \frac{q}{(k+b)^s} \right) z_k + \frac{d}{(k+b)^{\tau}}, \quad \forall~k \geq 1. \]
		Then, it follows $z_k \leq \frac{d}{q} \cdot (k+b)^{s-\tau} + o((k+b)^{s-\tau})$.
	\end{enumerate}
\end{lemma}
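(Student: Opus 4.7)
The statement is a classical pair of discrete Gronwall/comparison estimates (in fact, the two parts are attributed to Polyak's monograph, so strictly the proof can be deferred to that reference). To present a self-contained proof sketch, my plan is to treat both parts by the same template: guess the leading-order decay, compare the sequence against that guess, and close an induction once a Taylor expansion of the ``shift'' $(k+b)^{-p} \mapsto (k+1+b)^{-p}$ is carried out.

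For part (a), the candidate envelope is $\phi_k := \bigl(\frac{d}{q-p}+\varepsilon\bigr)(k+b)^{-p}$ for an arbitrary $\varepsilon>0$. I would first use the recursion together with the inductive hypothesis $z_k\leq\phi_k$ to obtain
\[
z_{k+1}\leq \Bigl(\tfrac{d}{q-p}+\varepsilon\Bigr)(k+b)^{-p}-\Bigl(\tfrac{dp}{q-p}+q\varepsilon\Bigr)(k+b)^{-p-1},
\]
after simplifying the coefficient of $(k+b)^{-p-1}$. The convexity of $x\mapsto x^{-p}$ yields $(k+1+b)^{-p}\geq (k+b)^{-p}-p(k+b)^{-p-1}+O((k+b)^{-p-2})$; substituting this into $\phi_{k+1}$ and comparing term by term reduces the inductive step to the elementary inequality $q\varepsilon\geq p\varepsilon+O((k+b)^{-1})$, which holds whenever $q>p$ for $k$ large enough. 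Since $\varepsilon>0$ is arbitrary, this gives $\limsup_k (k+b)^p z_k\leq d/(q-p)$, i.e.\ the claimed $\frac{d}{q-p}(k+b)^{-p}+o((k+b)^{-p})$ bound.

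For part (b), the situation is similar but the forcing $d/(k+b)^\tau$ is now of higher order than the ``damping'' $q/(k+b)^s$. The conjectured envelope is $\psi_k := \bigl(\frac{d}{q}+\varepsilon\bigr)(k+b)^{s-\tau}$. Assuming $z_k\leq\psi_k$ and plugging into the recursion, I would expand $(k+1+b)^{s-\tau} = (k+b)^{s-\tau}(1+(s-\tau)/(k+b)+O(k^{-2}))$ and observe that the dominant term in $\psi_k-\psi_{k+1}$ is $(\tau-s)(\tfrac{d}{q}+\varepsilon)(k+b)^{s-\tau-1}$, which is of higher order in $k$ than the damping-produced term $(q/(k+b)^s)\psi_k$ and the forcing $d/(k+b)^\tau$ (since $\tau-s-1 > -\tau$ iff $\tau > (1+s)/2$, which is not assumed; so one must be slightly more careful). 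The cleaner way is to use instead the damping as the leading mechanism and check $(1-q/(k+b)^s)\psi_k+d/(k+b)^\tau\leq\psi_{k+1}$, which reduces to $q\varepsilon(k+b)^{-\tau}\geq o((k+b)^{-\tau})$, valid for $k$ large. Again, arbitrariness of $\varepsilon$ yields the conclusion.

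The main obstacle I anticipate is not any individual algebraic manipulation but rather the careful bookkeeping of the several competing rates of decay: the damping factor $(1-q/(k+b)^s)$, the forcing term, and the ``shift error'' $\psi_{k+1}-\psi_k$ coming from incrementing $k$. All three must be matched to the correct leading order so that the induction step closes for $k\geq K(\varepsilon)$; the $o(\cdot)$ conclusion then emerges from letting $\varepsilon\downarrow 0$ afterwards. Once the template is in place the two parts are essentially identical, and in any case the lemma is a standard result that one could cite from \cite{Pol87} without reproving.
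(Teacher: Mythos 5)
First, note that the paper does not actually prove this lemma: it states that the result is a minor extension (by the shift $b\geq 0$) of Lemmas 4 and 5 in \cite{Pol87} and omits the verification, which is exactly your fallback position, so on that level your proposal matches the paper. Your sketched envelope/induction template is also the right kind of argument, and your rate bookkeeping is essentially sound; in particular your worry in part (b) about needing $\tau>(1+s)/2$ is a red herring, since the shift error $\psi_k-\psi_{k+1}=O((k+b)^{s-\tau-1})$ only has to be compared with the surplus $q\varepsilon(k+b)^{-\tau}$, and $s-\tau-1<-\tau$ holds automatically because $s<1$ (you arrive at this correct comparison in your ``cleaner way''). In part (a) the convexity bound $(k+1+b)^{-p}\geq (k+b)^{-p}-p(k+b)^{-p-1}$ is exact, so the induction step needs only $q\varepsilon\geq p\varepsilon$, with no error term at all.

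The genuine gap is that your induction has no base case, and your conclusion does not follow from the absorbing property alone. Nothing in the hypotheses guarantees that $z_K\leq(\frac{d}{q-p}+\varepsilon)(K+b)^{-p}$ (resp.\ $z_K\leq(\frac{d}{q}+\varepsilon)(K+b)^{s-\tau}$) at any admissible starting index $K\geq K(\varepsilon)$: the sequence may lie above the envelope for a long time, and with this single-term envelope you cannot simply enlarge the constant without destroying the limit $\frac{d}{q-p}$ (resp.\ $\frac{d}{q}$) you are trying to certify. Two standard repairs: (i) augment the envelope by a slack term that the damping annihilates exactly, e.g.\ $A\prod_{j=K}^{k-1}\bigl(1-\frac{q}{j+b}\bigr)=O((k+b)^{-q})=o((k+b)^{-p})$ in part (a) and $A\prod_{j=K}^{k-1}\bigl(1-\frac{q}{(j+b)^{s}}\bigr)=O\bigl(\exp(-\frac{q}{1-s}(k+b)^{1-s})\bigr)$ in part (b), with $A$ chosen large enough to dominate $z_K$; the induction then closes termwise and the slack is absorbed into the $o(\cdot)$; or (ii) pass to the normalized sequence $w_k=z_k(k+b)^{p}$ (resp.\ $w_k=z_k(k+b)^{\tau-s}$), show $w_{k+1}\leq(1-a_k)w_k+(c+o(1))a_k$ with $c=\frac{d}{q-p}$ (resp.\ $\frac{d}{q}$) and $\sum_k a_k=\infty$, and conclude $\limsup_k w_k\leq c$ because $\prod_k(1-a_k)\to 0$ — this is essentially the argument in \cite{Pol87}. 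Without one of these devices (or the explicit citation), the proof as sketched does not establish the stated bounds.
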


We note that \cref{lemma:rate} is a slight extension of the original results in \cite{Pol87} as it contains the additional scalar $b \geq 0$. Since the proof of \cref{lemma:rate} is identical to the ones given in \cite{Pol87}, we will omit an explicit verification here. We now present our second main result. 

\begin{theorem}\label{thm:convergence rate} Suppose that the conditions stated in \cref{cor:conv} are satisfied and let us consider the following family of step sizes  $\{\alpha_t\}_{t\geq 1}$: 
	\be \label{eq:final-steps} \alpha_t = \frac{\alpha}{(t+\beta)^\gamma}, \quad \alpha > 0, \quad \beta \geq 0, \quad \gamma \in \left(\frac{1}{2},1\right]. \ee
	Then, the sequence $\{x^t\}_{t\geq 1}$ converges to some stationary point $x^*$ of $f$. Let $\theta \in [0,1)$ \revise{and $c > 0$} denote the KL exponent of $f$ at $x^*$ \revise{and the corresponding KL constant, respectively}. Then, for all sufficiently large $t$, we have
	\[
	\|x^t - x^*\| = 
	\begin{cases} \mathcal O(t^{-(2\gamma-1)}) & \text{if} \ 0\leq \theta < \frac{\gamma}{3\gamma-1}, \\ 
	\mathcal O(t^{-\frac{(1-\theta)(1-\gamma)}{2\theta-1}}) & \text{if} \ \frac{\gamma}{3\gamma-1} \leq  \theta <1,
	\end{cases} \quad \text{where} \ \gamma \in \left(\frac12,1\right).
	\]
	Moreover, in the  case $\theta \in [0, \frac12]$ and $\gamma = 1$, if we set \revise{$\alpha > \frac{8c^2}{N}$}, then it follows $\|x^t - x^*\| = \mathcal O(t^{-1})$.
\end{theorem}

\begin{proof}
For simplicity and without loss of generality, we assume $\beta = 0$ throughout the proof. Convergence of $\{x^t\}_{t \geq 1}$ was established in \cref{cor:conv}. Furthermore, as shown in the proof of \cref{thm:finite sum}, the sequence $\{f(x^t)+u_t\}_{t \geq 1}$ monotonically decreases and converges to $f^*$. Hence, there exists $t_1 \geq 1$ such that $f(x^t)-f^*+u_t < 1$ for all $t \geq t_1$. Moreover, due to $\alpha_t \to 0$, we may increase $t_1$ so as to ensure $\alpha_t \leq 1/(\sqrt{2}\sL N)$ for all $t \geq t_1$. Throughout this proof, we always assume that $t \geq \max\{t_0, t_1\} =: t_2$ is sufficiently large, where $t_0$ is defined in the proof of \cref{thm:finite sum}. 

Let the associated desingularizing mapping $\varrho$ for the limit point $x^*$ be given by $\varrho(x) = cx^{1-\theta}$.  As in the proof of \cref{cor:conv}, the {\L}ojasiewicz inequality \eqref{eq:Lojasiewicz ineq} also holds for every exponent $[0,1) \ni \tau \geq \theta$. Hence, we can work with the following adjusted exponent $\tau$ and desingularizing function $\varrho_\tau$:
\[ \tau = \tau(\zeta) := \max\left\{\frac{1-\gamma}{3\gamma-1},\theta\right\} + \zeta \quad \text{and} \quad \varrho_\tau(x) = \frac{c(1-\theta)}{1-\tau}x^{1-\tau}, \]
where $\zeta \geq 0$ is chosen such that $\theta \leq \tau <1$ and $\tau > \frac{1-\gamma}{3\gamma-1}$. Since the desingularizing function $\varrho_\tau$ is a {\L}ojasiewicz function chosen from $\mathcal L$, condition \eqref{eq:qrho} holds with $C_{\varrho_\tau} = 1$ (see the discussion after \Cref{Def:KL-property}).  Thus, setting $\varrho\equiv \varrho_\tau$, $\vartheta = \frac12$, and $T = \infty$ in \eqref{eq:festi-kl},  it follows
\be\label{eq:rate-est-1}
\frac{N}{2} \sum_{t=k}^{\infty} \alpha_{t} \|\nabla f(x^{t-1})\| + \bar\vartheta \sum_{t=k}^{\infty} \|x^{t}-x^{t-1}\| \leq 2\delta_{k-1}  + \frac{N}{2} \sum_{t=k}^{\infty} \alpha_{t} [\varrho_\tau^\prime(u_{t-1})]^{-1}
\ee
for every $k \geq t_2 + 1$. 
%
%
%
Using the adjusted desingularizing function $\varrho_\tau$ and the definition of $\delta_t$, we can obtain
\be\label{eq:rate-est-2}
\begin{aligned}
    \delta_t &= \varrho_\tau(f(x^t)-f^* + u_t) \leq \frac{c(1-\theta)}{1-\tau}  (|f(x^t)-f^*| + u_t)^{1-\tau}  \\
	& = \frac{c(1-\theta)}{1-\tau} [ c(1-\theta) \cdot [\varrho_\tau^\prime(|f(x^t)-f^*| + u_t)]^{-1} ]^{\frac{1-\tau}{\tau}} \\ 
	&  \leq (c(1-\theta))^{\frac{1}{\tau}}(1-\tau)^{-1}[ \|\nabla f(x^t)\| + [\varrho_\tau^\prime(u_t)]^{-1}]^{\frac{1-\tau}{\tau}}, 
\end{aligned}
\ee
for all $t \geq t_2$, where the \revise{second} line is due to the special structure of the desingularizing function $\varrho_\tau$ and the last line follows from  condition \eqref{eq:qrho} and the {\L}ojasiewicz inequality \eqref{eq:Lojasiewicz ineq}. Combining \eqref{eq:rate-est-1} and \eqref{eq:rate-est-2} yields 
\be
\begin{aligned}  \label{eq:rate-est}
   &\sum_{t=k}^\infty \alpha_t [\|\nabla f(x^{t-1})\| + [\varrho_\tau^\prime(u_{t-1})]^{-1}] + \frac{2\bar\vartheta}{N} \sum_{t=k}^\infty \|x^t - x^{t-1}\|  \\ 
   &\hspace{4ex} \leq \frac{4(c(1-\theta))^{\frac{1}{\tau}}}{N(1-\tau)}  [ \|\nabla f(x^{k-1})\| + [\varrho_\tau^\prime(u_{k-1})]^{-1}]^{\frac{1-\tau}{\tau}} + 2 \sum_{t=k}^{\infty} \alpha_{t} [\varrho_\tau^\prime(u_{t-1})]^{-1} \end{aligned}
\ee
for every $k \geq t_2 + 1$. Upon setting $\varepsilon_k := 2 \sum_{t=k}^{\infty} \alpha_{t} [\varrho_\tau^\prime(u_{t-1})]^{-1}$ and
\begin{align*}
	\Gamma_k &:= {\sum}_{t=k}^\infty \alpha_t [\|\nabla f(x^{t-1})\| + [\varrho_\tau^\prime(u_{t-1})]^{-1}] + \frac{2\bar\vartheta}{N} {\sum}_{t=k}^\infty \|x^t - x^{t-1}\|,
\end{align*}
the inequality \eqref{eq:rate-est} can be rewritten as
\be \label{eq:kl-to-gam} 
\Gamma_k \leq \frac{4(c(1-\theta))^{\frac{1}{\tau}}}{N(1-\tau)}  \left [ \frac{\Gamma_{k}-\Gamma_{k+1}}{\alpha_k} \right]^{\frac{1-\tau}{\tau}} + \varepsilon_k. 
\ee

To apply \cref{lemma:step size} to further bound $\varepsilon_k$, we require the exponent $\tau \in [0,1)$ and the step size parameter $\gamma \in (\frac12,1]$ to satisfy the condition
\[ \gamma > \frac{1+\tau}{1+3\tau} \quad \iff \quad \tau > \frac{1-\gamma}{3\gamma-1}. \]
In this case,  recalling $u_{t-1} = 2\sG \sL^3 N^3 \sum_{j=t}^\infty \alpha_j^3$, we can derive the following bound for the error term $\varepsilon_k$:
\be \label{eq:err-eps} \varepsilon_k \leq 2^{1+\tau} ({\sf G}{\sf L}^3N^3)^\tau {\sum}_{t=k}^\infty \alpha_t \left[{\sum}_{j=t}^\infty \alpha_j^3 \right]^\tau \leq \frac{2^{1+\tau} ({\sf G}{\sf L}^3N^3)^\tau\bar a_\tau}{k^{\nu}} =: \frac{A_\tau}{{k^{\nu}}}, \ee
where $\nu := (1+3\tau)\gamma-(1+\tau)$ and $\bar a_\tau$ is the constant defined in \cref{lemma:step size}. 

By the triangle inequality and the fact that $\{x^t\}_{t\geq 1}$ converges to $x^*$, we have 
\[
\|x^{k-1} -x^*\|\leq {\sum}_{t=k}^\infty \|x^t - x^{t-1}\| \leq \frac{N}{2\bar\vartheta} \cdot \Gamma_k.
\]
Thus, in order to establish the rate of $\|x^k -x^*\| \rightarrow 0$, it suffices to derive the rate of $\Gamma_{k+1}  \rightarrow 0$ based on the recursion \eqref{eq:kl-to-gam}. In the following, we will prove the convergence rates depending on the value of the exponent $\tau$. 

\textbf{Case 1:} $\tau =\frac12$.  
In this case, the estimates in \eqref{eq:kl-to-gam} and \eqref{eq:err-eps} simplify to
\begin{align*}
	\Gamma_{k+1}&\leq\left[1-\frac{N\alpha_k}{8(c(1-\theta))^2}\right]\Gamma_{k}+\frac{N\alpha_k\varepsilon_k}{8(c(1-\theta))^2} \\
	&\leq \left[1-\frac{N\alpha}{8(c(1-\theta))^2}\frac{1}{k^\gamma} \right]\Gamma_{k} + \frac{N\alpha A_{1/2}}{8(c(1-\theta))^2} \frac{1}{k^{(7\gamma-3)/2}}.
\end{align*}
%
If $(1+\tau)/(1+3\tau) = 3/5<\gamma<1$, the rate is ${(7\gamma - 3)}/{2}-\gamma={(5\gamma-3)}/{2}$ by applying \Cref{lemma:rate} (b) and we obtain
\[  
\|x^t - x^*\| = \mathcal O(t^{-\frac{5\gamma-3}{2}}).
\]
Moreover, if $\gamma = 1$ and we set \revise{$\alpha > \frac{8c^2}{N} \geq \frac{8(c(1-\theta))^2}{N}$}, then \Cref{lemma:rate} (a) yields $\|x^t - x^*\| = \mathcal O(t^{-1})$.

\textbf{Case 2:} $\tau \in (\frac12,1)$, $\gamma \neq 1$. In this case, we have $\frac{1-\tau}{\tau} = \frac{1}{\tau} - 1 \in (0,1)$. Hence, invoking Minkowski's inequality---i.e., $(|a|+|b|)^p \leq 2^{p-1}(|a|^p + |b|^p)$, $p > 1$, $a,b \in \R$---in \eqref{eq:kl-to-gam}, we obtain
\be \label{eq:esti-nr5} 
\Gamma_k^{\frac{\tau}{1-\tau}} \leq C_\tau \left[ \frac{\Gamma_k-\Gamma_{k+1}}{\alpha_k} \right] + 2^{\frac{2\tau-1}{1-\tau}} \varepsilon_k^{\frac{\tau}{1-\tau}},
\ee
where $C_\tau:= 2^{\frac{4\tau-1}{1-\tau}}(c(1-\theta))^{\frac{1}{1-\tau}} / (N(1-\tau))^{\frac{\tau}{1-\tau}}$. We now discuss three different sub-cases.

\textbf{Sub-Case 2.1:} $\frac{1-\gamma}{3\gamma-1}<\tau < \frac{\gamma}{3\gamma-1}$. Due to $\frac{\tau}{1-\tau} > 1$, the function $x \mapsto h_\tau(x) := x^{\frac{\tau}{1-\tau}}$ is convex when $x\geq 0$, i.e., we have 
\[h_\tau(y) - h_\tau(x) \geq \frac{\tau}{1-\tau} \cdot x^{\frac{2\tau-1}{1-\tau}} (y-x) \quad \forall~x,y\in\R_{+}. \]
Let $\sigma > 0$ be an arbitrary constant. Rearranging the terms in \eqref{eq:esti-nr5} and using \eqref{eq:err-eps} and the convexity of $h_\tau$  yields 
\begingroup
\allowdisplaybreaks
\begin{align} 
	\nonumber \Gamma_{k+1} &\leq \Gamma_k - \frac{\alpha_k}{C_\tau} \Gamma_{k}^{\frac{\tau}{1-\tau}} + \frac{2^{\frac{2\tau-1}{1-\tau}}}{C_\tau} \alpha_k \varepsilon_k^{\frac{\tau}{1-\tau}} \\
	\nonumber & \leq \Gamma_k - \frac{\alpha_k}{C_\tau} [ h_\tau(\Gamma_k) - h_\tau(\sigma / k^\nu)] - \frac{\sigma^{\frac{\tau}{1-\tau}}}{C_\tau} \frac{\alpha_k}{k^{\frac{\nu\tau}{1-\tau}}} + \frac{2^{\frac{2\tau-1}{1-\tau}}A_\tau^{\frac{\tau}{1-\tau}}}{C_\tau} \frac{\alpha_k}{k^{\frac{\nu\tau}{1-\tau}}} \\ 
	\label{eq:big-estimate} &\leq \Gamma_k - \frac{\alpha_k}{C_\tau} \frac{\tau}{1-\tau} \left(\frac{\sigma}{k^\nu}\right)^{\frac{2\tau -1}{1-\tau}} \left[\Gamma_k - \frac{\sigma}{k^\nu}\right] + \left(2^{\frac{2\tau-1}{1-\tau}}A_\tau^\frac{\tau}{1-\tau}-\sigma^\frac{\tau}{1-\tau}\right) \frac{\alpha_k}{C_\tau k^{\frac{\nu\tau}{1-\tau}}} \\
	 \nonumber & = \left [ 1 - \frac{\sigma^\frac{2\tau-1}{1-\tau}\tau}{C_\tau(1-\tau)} \frac{\alpha}{k^{\frac{2\tau-1}{1-\tau}\cdot \nu+\gamma}} \right] \Gamma_k + \left[{2^{\frac{2\tau-1}{1-\tau}}\frac{A_\tau^\frac{\tau}{1-\tau}}{C_\tau}+\frac{2\tau-1}{1-\tau}}\frac{\sigma^\frac{\tau}{1-\tau}}{C_\tau}\right] \frac{\alpha}{k^{\frac{\tau}{1-\tau}\cdot\nu+\gamma}} . 
\end{align}
\endgroup
Here, let us mention that the application of the convexity of $h_\tau(x)$ is motivated by the proof of \cite[Lemma 2.2]{HuLiYu20}. Recalling $\nu = (1+3\tau)\gamma-(1+\tau)$ and using some simple algebraic manipulations, it can be shown that the conditions $\frac{2\tau-1}{1-\tau}\cdot \nu + \gamma < 1$ is equivalent to  $\tau < \frac{\gamma}{3\gamma-1}$.  
Consequently, in this sub-case, \Cref{lemma:rate} (b) is applicable and we have $ \|x^t - x^*\| = \mathcal O(t^{-\nu}) = \mathcal O(t^{-[(1+3\tau)\gamma - (1+\tau)]})$. 

\textbf{Sub-Case 2.2:} $\tau = \frac{\gamma}{3\gamma-1}$. In this case, we obtain $\frac{2\tau-1}{1-\tau}\cdot \nu + \gamma = 1$ and $\frac{\tau}{1-\tau}\cdot \nu + \gamma = 1 + \frac{1-\tau}{3\tau-1}$. Thus, if we choose the constant $\sigma$ in sub-case 2.1 such that $\sigma^\mu > {C_\tau(1-\tau)^2}/[(3\tau-1)\tau\alpha]$ where $\mu = (2\tau-1)/{(1-\tau)}$, we can apply \Cref{lemma:rate} (a) to \eqref{eq:big-estimate} to infer
\[
\|x^t - x^*\| =  \mathcal O(t^{-\frac{1-\tau}{3\tau-1}}). 
\]

\textbf{Sub-Case 2.3:} $\tau > \frac{\gamma}{3\gamma-1}$. Let $\sigma > \frac{C_\tau(1-\tau)^2(1-\gamma)}{(2\tau-1)\tau\alpha}$ be a given constant and let us set $\beta_k := (\sigma /k^{1-\gamma})^\frac{1-\tau}{2\tau-1}$. By repeating the steps of sub-case 2.1, we obtain
\begin{align*}
	\Gamma_{k+1} &\leq \Gamma_k - \frac{\alpha_k}{C_\tau} [ h_\tau(\Gamma_k) - h_\tau(\beta_k)] - \frac{\alpha_k}{C_\tau} h_\tau(\beta_k) + \frac{2^{\frac{2\tau-1}{1-\tau}}}{C_\tau} \alpha_k \varepsilon_k^{\frac{\tau}{1-\tau}} \\ 
	&\leq \Gamma_k - \frac{\alpha_k}{C_\tau} \frac{\tau}{1-\tau} \frac{\sigma}{k^{1-\gamma}} \left[\Gamma_k - \left(\frac{\sigma}{k^{1-\gamma}}\right)^{\frac{1-\tau}{2\tau -1}}\right] +  \frac{2^{\frac{2\tau-1}{1-\tau}}A_\tau^\frac{\tau}{1-\tau}}{C_\tau} \cdot \frac{\alpha}{k^{\frac{\tau}{1-\tau}\cdot\nu+\gamma}}\\
	& = \left [ 1 - \frac{\tau\sigma}{C_\tau(1-\tau)} \frac{\alpha}{k} \right] \Gamma_k + \frac{\tau\sigma^\frac{\tau}{1-\tau}}{C_\tau(1-\tau)} \frac{\alpha}{k^{\frac{\tau}{2\tau-1}\cdot(1-\gamma)+\gamma}} + \frac{2^{\frac{2\tau-1}{1-\tau}}A_\tau^\frac{\tau}{1-\tau}}{C_\tau} \cdot \frac{\alpha}{k^{\frac{\tau}{1-\tau}\cdot\nu+\gamma}}. 
\end{align*}
Next, we discuss the last two terms of this estimate to determine the leading order. Using $\frac12 <\tau < 1$, $\gamma > \frac12$, and some algebraic manipulations, we can show
\begin{align*} & \hspace{-4ex} \frac{\tau}{2\tau-1} (1-\gamma) < \frac{\tau}{1-\tau} [(1+3\tau)\gamma - (1+\tau)] \; \\
	&\iff \; 2\tau^2 - \gamma(1-\tau) < (2\tau-1)(1+3\tau)\gamma \; \iff \; \tau > \frac{\gamma}{3\gamma-1}.  \end{align*}
Hence, there exists $C_\tau^\prime(\sigma)$ such that we have
\[ \Gamma_{k+1} \leq  \left [ 1 - \frac{\tau\sigma}{C_\tau(1-\tau)} \frac{\alpha}{k} \right] \Gamma_k + \frac{C_\tau^\prime(\sigma)}{k^{\frac{\tau}{2\tau-1}\cdot(1-\gamma)+\gamma}} \]
for all $k$ sufficiently large. Moreover, due to $\frac{\tau}{2\tau-1}(1-\gamma)+\gamma = 1 + \frac{(1-\tau)(1-\gamma)}{2\tau-1}$, the choice of $\sigma$ ensures that \Cref{lemma:rate} (a) is applicable. This establishes
\[ \|x^t - x^*\| = \mathcal O(t^{-\frac{(1-\tau)(1-\gamma)}{2\tau-1}}). \]
Finally, we express the obtained rates in terms of the initial KL exponent $\theta$. We first notice that the mapping $\psi_\gamma : (\frac12,1) \to (0,1)$,  
\[\psi_\gamma(\tau) := \begin{cases} (3\gamma-1)\tau+\gamma-1 & \text{if } \frac{1-\gamma}{3\gamma-1} < \tau < \frac{\gamma}{3\gamma-1}, \\ \frac{1-\tau}{3\tau-1} & \text{if } \tau = \frac{\gamma}{3\gamma-1}, \\ \frac{(1-\tau)(1-\gamma)}{2\tau -1} & \text{if } \tau > \frac{\gamma}{3\gamma-1}, \end{cases} \quad \quad \gamma \in \left(\frac12,1\right)\]
is continuous on $(\frac12,1)$, increasing on $(\frac12,\frac{\gamma}{3\gamma-1}]$, and decreasing on $[\frac{\gamma}{3\gamma-1},1)$. Hence, in the case $\theta \geq \frac{\gamma}{3\gamma-1}$, we have $\tau = \theta + \zeta$ and the optimal rate is attained for $\zeta = 0$. Furthermore, in the case $0 \leq \theta < \frac{\gamma}{3\gamma-1}$,  the rate is maximized if we choose  $\zeta$ such that $\tau = {\gamma}/{(3\gamma-1)}$, which yields $\psi_\gamma(\gamma/(3\gamma-1)) = 2\gamma - 1$. This verifies the rates formulated in \Cref{thm:convergence rate} for all pairs $(\theta,\gamma) \in [0,1) \times (\frac12,1)$.
The  case $\theta \leq \frac12$ and $\gamma = 1$ is fully covered by \textbf{Case 1} (by properly choosing $\zeta$ such that $\tau=\frac12$). Therefore, all cases stated in \Cref{thm:convergence rate} are established.

The more general case $\beta \neq 0$ can be discussed in exactly the same way  by invoking \cref{lemma:rate} with $b \equiv \beta$.
\end{proof}

Compared to other standard KL results \cite{AttBol09,BolSabTeb14}, the derivation of the convergence rates of \RR is more involved due to the intricate interaction between the KL exponent $\theta$, \revise{the non-descent nature of \RRp,} and the dynamics of the diminishing step sizes $\{\alpha_t\}_{t\geq 1}$. \revise{More specifically, our rate recursion \eqref{eq:kl-to-gam} involves  the error $\varepsilon_k$ and the diminishing step sizes in a complicated way. This causes additional technical difficulties --- especially for the case $\theta \in (\frac12,1)$. In fact, the obtained recursion \eqref{eq:esti-nr5} differs significantly from the standard one \cite[Equation (13)]{AttBol09} and hence, we cannot follow the standard steps after \cite[Equation (13)]{AttBol09} to derive the rate results.} 
It can be observed from \Cref{thm:convergence rate} that when the KL exponent $\theta\in[0,\frac12]$ and $\gamma = 1$, the convergence rate of \RR is $\cO(t^{-1})$, matching that of the strongly convex setting \cite{gurbu2019}. When $\theta\in (\frac12,1)$ and $\gamma \in (\frac12,1)$, the convergence rate is $\cO(t^{-q})$ with $q\in (0,1)$ depending on $\theta$ and $\gamma$.  In \cref{fig:rate}, we illustrate the obtained rate results graphically.

\begin{figure}[t]
	\label{fig:rate}
	\centering
	\includegraphics[height=4.95cm]{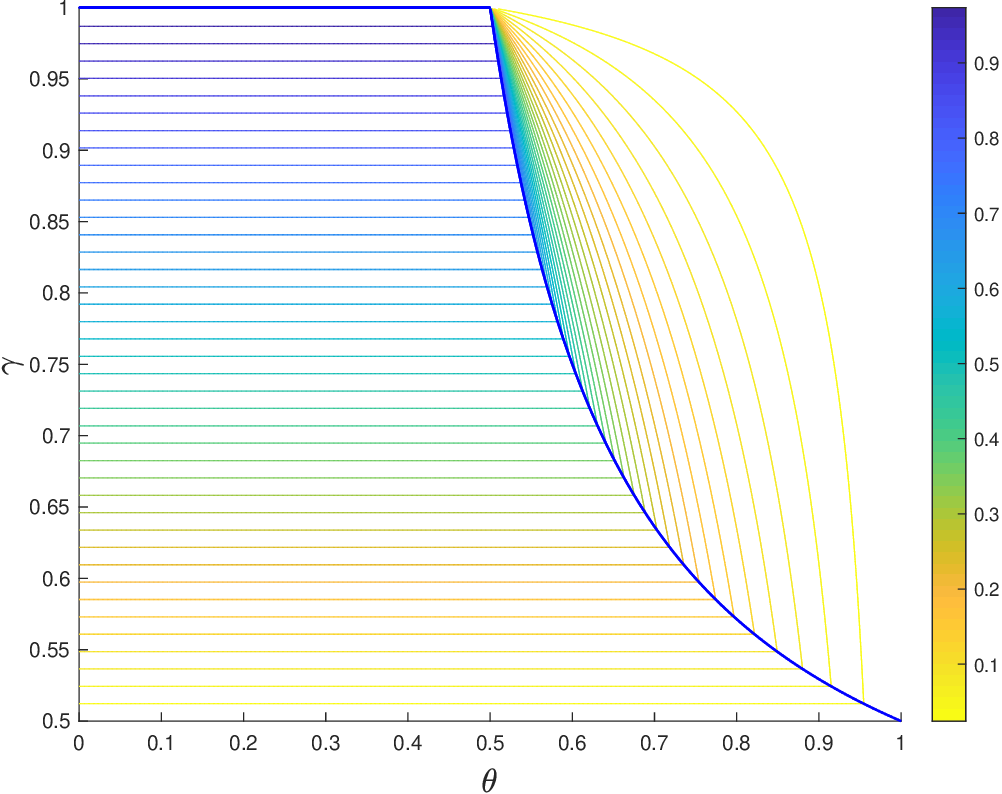} \,
	\includegraphics[height=4.95cm,trim=0ex -1.2ex 0ex -1ex, clip]{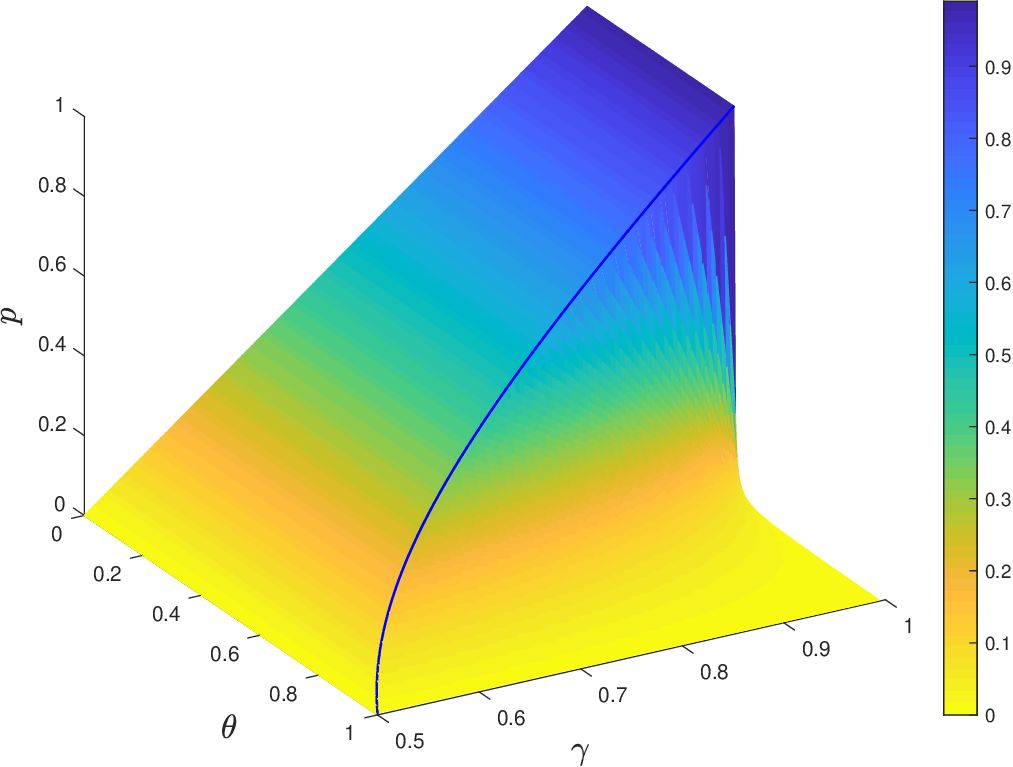} 
	\caption{Contour and surface plot of the rates $\mathcal O(t^{-p})$ obtained in \cref{thm:convergence rate} as a multifunction of the step size parameter $\gamma \in (\frac12,1]$ and the KL exponent $\theta \in [0,1)$. The blue (transition) line depicts the rate along the set $\{(\theta,1): \theta \in [0,\frac12)\} \cup \{(\theta,\gamma): \theta = \gamma/(3\gamma-1)\}$. }
\end{figure}

\subsection{Extension to Almost Sure Results} \label{sec:sto-sure}

As we have mentioned, \RR naturally is a stochastic optimization algorithm. Thus, it is paramount to generalize the convergence results derived for a single trajectory and to discuss convergence in a more rigorous stochastic framework. Fortunately, many of our results do not depend on the specific choice of the random permutations $\{\sigma^t\}_{t\geq 1}$ and can be easily transferred to the fully stochastic setting. 

In the following, we briefly specify the underlying stochastic model for \RR which allows us to describe the stochastic components of \Cref{algo:gm-shuff} in a unified way. Based on the structure of \Cref{algo:gm-shuff}, we can define a suitable sample space $\Omega$ via $\Omega := \Lambda^\infty := \{\omega = (\omega_1,\omega_2,\omega_3,\dots): \omega_i \in \Lambda \, \text{for all} \, i\}$, where $\Lambda$ is defined in \eqref{eq:permutations}. Each sample $\omega \in \Omega$ then corresponds to a specific sequence of (randomly chosen) permutations as generated, e.g., by a single run of \Cref{algo:gm-shuff}. Similar to infinite coin toss models, this sample space can be naturally equipped with a $\sigma$-algebra $\mathcal F$ and a probability function $\mathbb P$ to form a proper probability space $(\Omega,\mathcal F,\Prob)$. 

Given this stochastic model, let us now consider a general stochastic process of iterates $\{X^t\}_{t \geq 1}$ generated by \RRp. So far the previous analysis was concerned with the investigation of a single trajectory $\{X^t(\omega)\}_{t \geq 1} \equiv \{x^t\}_{t \geq 1}$ for some fixed $\omega \in \Omega$. Since the proof of \cref{Prop:convergence} is purely deterministic, we can immediately infer:
\begin{itemize}
	\item Under assumption \ref{A1} and the step size condition \eqref{eq:ass-step}, the stochastic process $\{f(X^t)\}_{t \geq 1}$ surely converges to some random variable $F^* : \Omega \to \R$ and we have $\lim_{t \to \infty} \|\nabla f(X^t)\| = 0$ surely (i.e., for all $\omega \in \Omega$). 
\end{itemize}
Generalizing the KL inequality-based results derived in \Cref{sec:sequential} requires some more subtle changes. In particular, the set of accumulation points $\mathcal C$ defined in \eqref{eq:limit point set} depends on the selected sample $\omega \in \Omega$ and the trajectories of $\{X^t\}_{t \geq 1}$ might generally converge to different limit points. Hence, we need to work with proper stochastic extensions of the KL conditions formulated in \cref{Assumption:2}. Let us first formally introduce the set of accumulation points $\mathcal C$ of $\{X^t\}_{t \geq 1}$ as the set-valued multifunction 
\[ \mathcal C : \Omega \rightrightarrows \Rn, \quad  \mathcal C(\omega) := \{ x \in \Rn: \exists~\text{a subsequence} \; \{t_\ell\}_\ell \; \text{s.t.} \;  X^{t_\ell}(\omega) \to x \text{ as } \ell \to \infty\}. \]
We can then consider the following stochastic versions of condition \ref{B1} and \ref{B3}.

\begin{assumption}
	\label{Assumption:3} 
	We assume the following conditions:
	\begin{enumerate}[label=\textup{\textrm{(C.\arabic*)}},topsep=0pt,itemsep=0ex,partopsep=0ex]
		\item \label{C1} The stochastic process $\{X^t\}_{t\geq 1}$ is bounded almost surely, i.e., $\sup_t \|X^t\| < \infty$ almost surely.
		\item \label{C2} There exists an event $\mathcal K \in \mathcal F$ with $\Prob(\mathcal K) = 1$ such that the KL inequality holds for every $\bar x \in \bigcup_{\omega \in \mathcal K}\mathcal C(\omega)$ and each respective desingularizing function $\varrho$ can be selected from the class of {\L}ojasiewicz functions $\mathcal L$. 
	\end{enumerate}
\end{assumption}

Notice that assumption \ref{C2} still holds if $f$ is subanalytic or semialgebraic. Furthermore, since the sequence $\{f(X^t)\}_{t\geq1}$ converges surely to some $F^*$, condition \ref{C1} is satisfied when $f$ is coercive or when it has bounded level sets. Under \ref{C1} and \ref{C2}, we can now derive an almost sure variant of the convergence results established in \cref{cor:conv} and \cref{thm:convergence rate}.

\begin{theorem}\label{thm:sto-rate} Suppose that the assumptions \ref{A1}, \ref{C1}, and \ref{C2} are satisfied and let the step sizes $\{\alpha_t\}_{t\geq 1}$ be defined as in \eqref{eq:final-steps} for suitable choices of the parameters $\alpha$, $\beta$, and $\gamma$.
	%
	%
	Then, the stochastic process $\{X^t\}_{t\geq 1}$ almost surely converges to a $\mathrm{crit}(f)$-valued random vector $X^* : \Omega \to \mathrm{crit}(f)$. Furthermore, let $\Theta : \Omega \to [0,1)$ denote the function that maps each sample $\omega \in \mathcal K$ to the KL exponent of $f$ at $X^*(\omega)$ and let $C : \Omega \to \R_+$ be the corresponding function of KL constants. Then, the event 
	\[ \mathcal G := \left\{ \omega \in \Omega: {\limsup}_{t \to \infty} \; t^{\Psi(\omega)} \cdot \|X^t(\omega) - X^*(\omega)\| < \infty \right\} \]
	occurs almost surely, where the rate function $\Psi : \Omega \to \R_+$ is given by 
	\be \label{eq:psi} \Psi(\omega) := \begin{cases} 2\gamma-1 & \text{if } 0 \leq \Theta(\omega) < \frac{\gamma}{3\gamma-1} \; \text{and} \; \gamma \in (\frac12,1), \\ \frac{(1-\Theta(\omega))(1-\gamma)}{2\Theta(\omega)-1} & \text{if } \frac{\gamma}{3\gamma-1} \leq \Theta(\omega) <1  \; \text{and} \; \gamma \in (\frac12,1), \\ 1 & \text{if } \Theta(\omega) \in [0,\frac12], \; \gamma = 1,  \; \text{and} \; \revise{\alpha > \frac{8C^2(\omega)}{N}}, \\ 0 & \text{otherwise.} 
	\end{cases} \ee
	%
	%
\end{theorem}

\begin{proof} Assumptions \ref{C1} and \ref{C2} imply that there is an event $\mathcal E \in \mathcal F$ with $\Prob(\mathcal E) = 1$ such that $\{X^t(\omega)\}_{t\geq 1}$ is bounded for all $\omega \in \mathcal E$ and the {\L}ojasiewicz inequality holds for every $\bar x \in \mathcal C(\omega)$ and all $\omega \in \mathcal E$. Hence, the results in \cref{cor:conv} and \cref{thm:convergence rate} can be applied to all trajectories $\{X^t(\omega)\}_{t\geq 1}$, $\omega \in \mathcal E$ and we can infer $\mathcal E \subseteq \mathcal G$, which proves \cref{thm:sto-rate}. \end{proof}


\section{An Informal Analysis Framework and Reshuffled Proximal Point Method}

In \Cref{sec:convergence analysis}, we established the strong limit-point convergence results for \RR under the KL inequality. One remarkable feature of our results is that \RR is a non-descent method and the existing standard KL  analysis framework is not applicable. In this section, we summarize the main steps and core ideas in an analysis framework. As an immediate application of such a framework,  we show that the reshuffled proximal point method also shares the same convergence results as those of {\sf RR}.

\subsection{An Informal Analysis Framework}\label{sec:framework}
We consider the general problem 
\[
\minimize_{x\in \R^n} \ f(x), 
\]
where the function $f: \R^n \rightarrow \R$ is smooth and bounded from below. In order to solve this problem, suppose we further have access to a generic (\emph{non-descent}) algorithm $\cal A$ with \emph{diminishing step sizes}  $\{\alpha_t\}_{t\geq1}$, which iterates as
\[
x^{t} = \mathcal{A} (x^{t-1}, \alpha_t), \quad t = 1,2,\ldots
\]

Typical requirements on the diminishing step sizes are 
\[
\alpha_t >0, \quad {\sum}_{t=1}^{\infty} \alpha_t = \infty, \quad \text{and} \quad {\sum}_{t=1}^{\infty} \alpha_t^p < \infty
\]
for some $p\geq 2$. 
The goal is to establish strong limit-point convergence and derive the corresponding rate of convergence for algorithm $\mathcal{A}$. Our analysis in \Cref{sec:convergence analysis} consists of three main steps, which we summarize below in a simplified framework. For ease of exposition, we consider fixed permutations $\{\sigma^t\}_{t\geq 1}$ without randomness.

\begin{enumerate}
	\item[({\sf A})] \textbf{Approximate descent property.}  Based on the algorithmic properties of $\mathcal A$, show that
	\[
	f({x}^t)\leq f({x}^{t-1})- \kappa_1 \alpha_t \|\nabla{f}({x}^{t-1})\|^2- \frac{\kappa_2}{\alpha_t}\|{x}^{t}-{x}^{t-1}\|^2 +\kappa_3 \alpha_t^q,
	\]
	where $\kappa_1,\kappa_3 >0$, $\kappa_2 \geq 0$ and $q\geq p \geq 2$ are some constants.
	
	\item[({\sf B})] \textbf{Weak convergence.} Based on the approximate descent property ({\sf A}), establish weak convergence of $\mathcal A$, i.e., $\lim_{t \to \infty} \|\nabla f(x^t)\| = 0$.  Consequently, any accumulation point of $\{x^t\}_{t\geq 1}$ is guaranteed to be a stationary point of $f$. 
\end{enumerate}

As mentioned, the sufficient decrease property is a ubiquitous ingredient of the standard KL analysis framework. With the help of this  algorithmic property, it is quite straightforward to establish the weak convergence result formulated in ({\sf B}); see, e.g., \cite[Section 3.2]{BolSabTeb14}.  However, due to the potential non-descent nature of the algorithmic scheme $\mathcal A$, it is unlikely to find such a sufficient decrease condition for $\mathcal A$ in general. Instead, based on the approximate descent property in ({\sf A}), a more elementary analysis can be applied to prove weak convergence results via properly invoking the diminishing step sizes. Such a strategy allows to avoid the typical sufficient decrease condition.  

Upon showing the weak convergence result, it is possible to infer several standard conclusions about the set of accumulation points   and to derive a uniformized version of the KL inequality. 

If $f$ further satisfies the KL inequality, then one can perform the last step:

\begin{enumerate}
	\item[({\sf C})] \textbf{Application of the KL inequality.} Based on the KL inequality of $f$, verify that the whole sequence $\{x^t\}_{t\geq1}$ converges to a single stationary point of $f$ and derive the corresponding convergence rates depending on the KL exponent $\theta$ and step size strategy. 
\end{enumerate} 

One core idea is to derive an auxiliary descent-type condition for $\{x^t\}_{t \geq 1}$ by accumulating the potential ascent (i.e., the non-descent terms $\kappa_3 \alpha_t^q$) in the approximate descent property of each step of the algorithm $\mathcal A$. Furthermore, the usage of the slightly stronger variant of the KL inequality in \cref{Def:KL-property}, \ref{B2}, or \ref{B3} allows to overcome the difficulties caused by the non-descent nature of $\{f(x^t)\}_{t\geq 1}$. 
By combining the standard KL analysis framework and the dynamics of the diminishing step sizes one can then derive strong limit-point convergence and the corresponding convergence rates depending on the KL exponent $\theta$ and chosen step sizes. 

\revise{\Cref{thm:finite sum} implies that} strong limit-point convergence is typically a consequence of the finite length property of $\{x^t\}_{t\geq1}$, i.e., $\kappa_2 \sum_t \|x^t - x^{t-1}\| <\infty$. However, the property ({\sf A}) allows the choice $\kappa_2 = 0$. In this case, one can still establish $\sum_t \alpha_t \|\nabla f(x^{t-1})\| <\infty$ \revise{as shown in the proof of \Cref{thm:finite sum}}. One possible way to prove strong limit-point convergence in this situation is to show a bound of the form $\|x^t -x^{t-1}\|\leq c_1 \alpha_t\|\nabla f(x^{t-1})\| + c_2 \alpha_t^{\bar p}$ with $\bar p \geq p$ and $c_1,c_2>0$. Such a bound can be mild for a large class of non-descent methods \cite{BerTsi00}. Strong convergence can then be obtained by further invoking the conditions on the diminishing step sizes. 

In principle, the above systematic strategy can be applied to non-descent methods that possess the approximate descent property ({\sf A}). One example is \RRp, as verified in \Cref{lemma:approx descent}. In the next subsection, we introduce another algorithm that also satisfies this approximate descent property, and hence it has the same strong limit-point convergence results shown in \Cref{sec:convergence analysis} as well.

\subsection{Reshuffled Proximal Point Method and its Convergence}\label{sec:prox point}

As an application of the systematic framework stated in the last subsection, we consider another non-descent method---namely, the reshuffled proximal point method ({\sf RPP})---and establish its strong limit-point convergence result under the KL inequality. 

{\sf RPP} has the same algorithmic framework as \RR (\Cref{algo:gm-shuff}) except for the updating step \eqref{eq:RS update}. {{\sf RPP}} replaces \eqref{eq:RS update} with the following proximal point step:
\be \label{eq:prox point update}
\tilde x_{i}^{t}\in \argmin_{x\in \R^n}  \ f(x, \sigma^{t}_{i}) + \frac{1}{2\alpha_{t}} \|x - \tilde x_{i-1}^t\|^2.
\ee
{{\sf RPP}} covers the incremental proximal point method (cf. \cite{bertsekas2011}) as a special case.  Let us first interpret the update \eqref{eq:prox point update} as a certain gradient-type step. The first-order optimality condition underlying $\tilde x^t_i$ implies that 
\be\label{eq:interpret PPM update}
\tilde x_{i}^{t} =  \tilde x_{i-1}^{t} - \alpha_{t} \nabla f(\tilde x_{i}^t, \sigma^{t}_{i}).
\ee
Clearly, this formula is similar to the update of \RRp. The only difference is that the gradient in \eqref{eq:interpret PPM update} is evaluated implicitly at $\tilde x_{i}^t$, while the gradient in \RR is evaluated at $\tilde x_{i-1}^t$. This observation motivates us to follow similar arguments to those of \Cref{lemma:descent} and \Cref{lemma:approx descent} for showing the approximate descent property of {{\sf RPP}}.
\begin{lemma}\label{lemma:approx descent PPM}
	Suppose that assumption \ref{A1} is valid.  Let $\{x^t\}_{t\geq 1}$ be the sequence generated by {{\sf RPP}} for solving problem \eqref{eq:problem} with step sizes $\{\alpha_t\}_{t\geq 1}$ satisfying
	$\alpha_t \in (0,{1}/{(\sqrt{2}{\sf L}N)}]$ and $\sum_{t=1}^\infty \alpha_t^3 < \infty$.
	Then, for all $t\geq 1$, we have
	\be\label{eq:approx descent PPM}
	f(x^t) \leq  f(x^{t-1})   - \frac{N\alpha_t}{2}\|\nabla{f}(x^{t-1})\|^2   -\frac{1 - \sL N \alpha_t}{2N \alpha_t}\left\| x^t-x^{t-1} \right\|^2 + 2\sG \sL^3 N^3  \alpha_t^3.
	\ee
	Here, $\sG := \left( f(x^{0})- \bar f_{\min} \right) \exp\left(\sum_{j=1}^{\infty}  6\sL^3 N^3 \alpha_j^3\right)$ is a finite positive constant.
\end{lemma}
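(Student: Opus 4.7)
The plan is to mirror closely the two-stage argument that was used for \RR in \Cref{lemma:descent} and \Cref{lemma:approx descent}, adapting each step so that it accommodates the \emph{implicit} gradient evaluation of the shuffled proximal point method. The starting observation is the first--order optimality relation \eqref{eq:interpret PPM update}, which lets us treat the inner update as a gradient step, the only (but essential) difference from \RR being that $\nabla f$ is taken at $\tilde x_i^t$ rather than at $\tilde x_{i-1}^t$.

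First I would establish an analog of \Cref{lemma:descent}. Apply the descent lemma to the full objective $f$ to bound $f(x^t)-f(x^{t-1})$ by $\iprod{\nabla f(x^{t-1})}{x^t-x^{t-1}} + \frac{\sL}{2}\|x^t-x^{t-1}\|^2$, expand $x^t-x^{t-1}=-\alpha_t\sum_{i=1}^N\nabla f(\tilde x_i^t,\sigma_i^t)$ using \eqref{eq:interpret PPM update}, and split the cross term through the identity $\iprod{a}{b}=\tfrac12(\|a\|^2+\|b\|^2-\|a-b\|^2)$ with $a=\nabla f(x^{t-1})$ and $b=\tfrac1N\sum_i\nabla f(\tilde x_i^t,\sigma_i^t)$. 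Recognising $\|\tfrac1N\sum_i\nabla f(\tilde x_i^t,\sigma_i^t)\|^2=\|x^t-x^{t-1}\|^2/(N\alpha_t)^2$ yields the two desired negative terms $-\tfrac{N\alpha_t}{2}\|\nabla f(x^{t-1})\|^2$ and $-\tfrac{\|x^t-x^{t-1}\|^2}{2N\alpha_t}$, together with an error term controlled by the Lipschitz constant and by $\tilde V_t := \sum_{i=1}^N\|\tilde x_i^t - x^{t-1}\|^2$.

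Next I would bound $\tilde V_t$ in the same spirit as the treatment of $V_t$ in \Cref{lemma:descent}. Since $\tilde x_i^t - x^{t-1} = -\alpha_t \sum_{j=1}^i \nabla f(\tilde x_j^t,\sigma_j^t)$, apply Young's inequality together with the splitting $\nabla f(\tilde x_j^t,\sigma_j^t) = [\nabla f(\tilde x_j^t,\sigma_j^t)-\nabla f(x^{t-1},\sigma_j^t)] + \nabla f(x^{t-1},\sigma_j^t)$; use assumption \ref{A1} on the first piece and the variance-type bound \eqref{eq:grad upper bound} on the second to obtain
\[
\tilde V_t \leq c_1 \sL^2 N^2 \alpha_t^2\, \tilde V_t + c_2 \sL N^3 \alpha_t^2 (f(x^{t-1}) - \bar f_{\min})
\]
for absolute constants $c_1,c_2>0$. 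The main obstacle, and the one real new twist compared to \RR, is that the implicit evaluation at $\tilde x_i^t$ makes $\tilde V_t$ itself appear on both sides of this recursion (whereas in \RR one only has $\sum\|\tilde x_{j-1}^t-x^{t-1}\|^2$ on the right). This is handled by absorbing the $\tilde V_t$ term on the right into the one on the left, which is legitimate precisely under the standing step-size bound $\alpha_t\le 1/(\sqrt{2}\sL N)$, giving $\tilde V_t \leq C\sL N^3\alpha_t^2 (f(x^{t-1})-\bar f_{\min})$ for some constant $C>0$. Substituting this into the descent inequality produces, after regrouping, an analog of the bound in \Cref{lemma:descent}, with an error coefficient of the form $\bar C \sL^3 N^3 \alpha_t^3(f(x^{t-1})-\bar f_{\min})$, where the absolute constant $\bar C$ is expected to be $6$ (versus $2$ for \RR), reflecting the extra $\|x^t-x^{t-1}\|^2$ contribution present in $\tilde V_t$ that was absent from $V_t$.

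Finally, I would close the argument exactly as in \Cref{lemma:approx descent}: use the recursion $f(x^t)-\bar f_{\min}\le (1+\bar C \sL^3 N^3\alpha_t^3)(f(x^{t-1})-\bar f_{\min})$, unroll it, and apply $\log(1+a)\le a$ together with $\sum_t\alpha_t^3<\infty$ to conclude the uniform bound $f(x^t)-\bar f_{\min}\le \sG$ with $\sG = (f(x^0)-\bar f_{\min})\exp(\sum_j \bar C \sL^3 N^3\alpha_j^3)$. Plugging this uniform bound back into the one-step inequality yields the stated \eqref{eq:approx descent PPM}. Beyond the control of $\tilde V_t$ highlighted above, no other essentially new ingredient is required, so the application of the analysis framework summarized in \Cref{sec:framework} goes through verbatim from this point onward.
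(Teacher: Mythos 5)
Your proposal is correct and takes essentially the same route as the paper: interpret the proximal step via the implicit gradient identity \eqref{eq:interpret PPM update}, repeat the argument of \Cref{lemma:descent} with the error now controlled by $\tilde V_t=\sum_{i=1}^N\|\tilde x_i^t-x^{t-1}\|^2$ (absorbed using the step-size cap, yielding the constant $6$ in place of $2$), and then unroll the resulting recursion exactly as in \Cref{lemma:approx descent}. One minor remark: the appearance of $\tilde V_t$ on both sides of its own bound is not actually a new feature of the proximal variant---the \RR analysis already has $V_t$ on both sides and absorbs it in the same way; the genuine difference is only that $\tilde V_t$ additionally contains the term $\|x^t-x^{t-1}\|^2$, which you correctly identify as the source of the larger constant.
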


As the proof of \Cref{lemma:approx descent PPM} is essentially identical to our derivation of \Cref{lemma:approx descent}, we will omit details here. \Cref{lemma:approx descent PPM} characterizes the approximate descent property of {{\sf RPP}}. Indeed, the recursion is the same as that of {\sf RR}. Therefore, we can follow our analysis framework ({\sf A})--({\sf C}) to show similar strong limit-point convergence results to that of \RR for {{\sf RPP}}, which we summarize in the following corollary. 

\begin{corollary}\label{coro:PPM} Suppose that the conditions \ref{A1}, \ref{C1}, and \ref{C2} are satisfied and let the stochastic process $\{X^t\}_{t \geq 1}$ be generated by {{\sf RPP}} utilizing step sizes of the form \eqref{eq:final-steps}. Then, $\{X^t\}_{t\geq 1}$ almost surely converges to a $\mathrm{crit}(f)$-valued random vector $X^* : \Omega \to \mathrm{crit}(f)$. Furthermore, let $\Theta : \Omega \to [0,1)$ denote the function that maps each sample $\omega \in \mathcal K$ to the KL exponent of $f$ at $X^*(\omega)$ and let $C : \Omega \to \R_+$ be the associated function of KL constants. Then, the event 
	\[ \mathcal H := \left\{ \omega \in \Omega: {\limsup}_{t \to \infty} \; t^{\Psi(\omega)} \cdot \|X^t(\omega) - X^*(\omega)\| < \infty \right\} \]
	occurs almost surely, where the rate function $\Psi : \Omega \to \R_+$ is given as in \eqref{eq:psi}.
\end{corollary}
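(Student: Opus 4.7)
The plan is to instantiate the three-step analysis framework (\textsf{A})--(\textsf{C}) of \Cref{sec:framework} for \Cref{algo:proxpoint-shuff} and then promote the single-trajectory statements to the almost-sure setting exactly as in \Cref{sec:sto-sure}. Step (\textsf{A}) is already in hand: \Cref{lemma:approx descent PPM} establishes the approximate descent property \eqref{eq:approx descent PPM}, which has the \emph{same form} as \eqref{eq:approximate descent} for \RR (up to the slightly larger constant $\sG$). Because Propositions, Theorems, and Corollaries in \Cref{sec:subsequential}--\Cref{sec:rate} rely on their arguments only through (i) the recursion \eqref{eq:approximate descent} and (ii) a bound of the form $\|x^{t}-x^{t-1}\| \leq c \alpha_{t}$ for some constant $c$ depending on $\sG,\sL,N$, the main residual work is to re-derive that step-size bound in the proximal case; the rest of the argument can be quoted verbatim.

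For the step (\textsf{B}) I would proceed as in the proof of \Cref{Prop:convergence}. The implicit update \eqref{eq:prox point update} gives the identity \eqref{eq:interpret PPM update}, i.e., $\tilde x_i^t = \tilde x_{i-1}^t - \alpha_t \nabla f(\tilde x_i^t,\sigma_i^t)$. Adding these identities telescopically and applying \ref{A1}, \eqref{eq:grad upper bound}, and the analogue of the variance bound \eqref{eq:v_t} (whose proximal counterpart is implicit in the derivation of \Cref{lemma:approx descent PPM}), one obtains
\[
\|x^t-x^{t-1}\| \;\leq\; \alpha_t \sum_{i=1}^{N}\|\nabla f(\tilde x_i^t,\sigma_i^t)\| \;\leq\; c \, \alpha_t,
\]
for a constant $c=c(\sG,\sL,N)$, exactly as needed. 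With this bound plus the recursion \eqref{eq:approx descent PPM}, the very same telescoping-and-contradiction argument used in \Cref{Prop:convergence} yields $f(x^t)\to f^\ast$ for some $f^\ast\in\R$ and $\lim_{t\to\infty}\|\nabla f(x^t)\|=0$; hence every accumulation point of $\{x^t\}_{t\geq 1}$ is a stationary point of $f$.

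For step (\textsf{C}), since \eqref{eq:approx descent PPM} is structurally identical to \eqref{eq:approximate descent}, the auxiliary sequence $u_t := 2\sG\sL^3N^3\sum_{j=t+1}^{\infty}\alpha_j^3$ gives the same monotonicity of $\{f(x^t)+u_t\}_{t\geq 1}$, and the KL-driven estimate chain in \Cref{thm:finite sum} goes through unchanged---the only ingredients are the monotonicity of $\{f(x^t)+u_t\}_{t\geq 1}$, the quasi-additivity property \eqref{eq:qrho} (secured via \ref{C2} through \L{}ojasiewicz desingularizing functions, for which $C_\rho=1$), the uniformized KL inequality of \Cref{lemma:uniform KL}, and the step-size conditions \eqref{eq:kl-step}. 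The step-size bookkeeping of \Cref{sec:step sizes} and the rate extraction of \Cref{thm:convergence rate} depend only on the recursion \eqref{eq:kl-to-gam}, so they transfer literally, yielding the trajectory-wise conclusion that $\{x^t\}_{t\geq 1}$ has finite length, converges to some $x^\ast\in\mathrm{crit}(f)$, and obeys $\|x^t-x^\ast\|=\mathcal O(t^{-\Psi_\gamma})$ with $\Psi_\gamma$ defined by \eqref{eq:psi}.

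Finally, to pass from a single trajectory to the almost-sure statement, I would follow the proof of \Cref{thm:sto-rate} verbatim: under \ref{C1} and \ref{C2} there exists an event $\mathcal E\in\mathcal F$ with $\Prob(\mathcal E)=1$ on which the trajectories $\{X^t(\omega)\}_{t\geq 1}$ are bounded and every accumulation point lies in a set of points satisfying the \L{}ojasiewicz inequality. Applying the trajectory-wise results above for each $\omega\in\mathcal E$ gives $\mathcal E\subseteq\mathcal H_\gamma$, which proves the claim. The single part of the argument that requires more than copy-paste is the derivation of the bound $\|x^t-x^{t-1}\|\leq c\alpha_t$ sketched in the second paragraph; this is where the implicit nature of the proximal update makes the argument slightly more delicate than in \RR, since one must control $\nabla f(\tilde x_i^t,\sigma_i^t)$ rather than $\nabla f(\tilde x_{i-1}^t,\sigma_i^t)$. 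I expect this to be the main technical obstacle, but it is resolved by the same recursive bootstrapping that produced \eqref{eq:v_t} and that is already implicitly used in the proof of \Cref{lemma:approx descent PPM}.
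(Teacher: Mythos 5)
Your proposal is correct and follows essentially the same route as the paper: the paper likewise treats \Cref{coro:PPM} as an immediate consequence of the approximate descent property in \Cref{lemma:approx descent PPM} being structurally identical to \eqref{eq:approximate descent}, after which the framework (\textsf{A})--(\textsf{C}) and the almost-sure argument of \Cref{thm:sto-rate} are invoked without further elaboration. Your explicit remark that the displacement bound $\|x^t-x^{t-1}\|\leq c\,\alpha_t$ must be re-derived for the implicit update (via \eqref{eq:interpret PPM update}, the Lipschitz bound on $\nabla f(\tilde x_i^t,\sigma_i^t)$, and the proximal analogue of \eqref{eq:v_t}) is a detail the paper leaves implicit, and your sketch of it is sound.
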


\section{Conclusion} 
In this paper, we studied \RR  for smooth nonconvex optimization problems with a finite-sum structure. \RR is a practical algorithm and widely utilized.  Different from the existing convergence results for {\sf RR},  we established the first strong limit-point convergence results for \RR with appropriate diminishing step sizes under the KL inequality.  In addition, we derived the corresponding rate of convergence, depending on the KL exponent $\theta$ and on suitably selected diminishing step sizes $\{\alpha_t\}_{t\geq 1}$.  When $\theta \in [0,\frac12]$, \RR can achieve a rate of $\cO(t^{-1})$. When $\theta\in (\frac{1}{2}, 1)$, \RR has a convergence rate of the form $\cO(t^{-q})$ with $q\in (0,1)$.  Our results generalize the existing works that assume strong convexity to much broader nonconvex settings. 

Our results motivated a new KL analysis framework, which generalizes the standard one to cover a class of non-descent methods with diminishing step sizes. We summarized the main ingredients of our derivations in an informal analysis framework, which is of independent interest. This more general KL framework can potentially be applied to various other non-descent methods with diminishing step sizes.

\section*{Acknowledgments} \revise{We would like to thank the Associate Editor and three anonymous reviewers for their detailed and constructive comments, which have helped greatly to improve the quality and presentation of the manuscript.}

\appendix

\section{Proof of \cref{lemma:uniform KL}} \label{app:sec-unikl} 
\begin{proof} 
	The first steps of the proof are identical to the derivation of \cite[Lemma 6]{BolSabTeb14}. 
In particular, following the proof of \cite[Lemma 6]{BolSabTeb14} and using \cref{lemma:limit point set}, we can define a desingularizing function $\varrho \in \mathcal S_\eta$ of the form $\varrho(x) := \sum_{i=1}^m \varrho_i(x)$, $\varrho_i \in \mathcal Q_\eta$, $i = 1,\dots,m$ that satisfies the uniformized KL inequality \eqref{eq:uni-kl} for some $\delta, \eta, m > 0$.
%
	%
	%
	%
	%
	The proof is complete if we can show $\varrho\in\mathcal Q_\eta$. Let $C_{\rho_i} > 0$ denote the quasi-additivity constant in \eqref{eq:qrho} associated with each $\varrho_i$, $i=1,\dots,m$, and let $x,y \in (0,\eta)$ with $x+y < \eta$  be arbitrary. Without loss of generality, let us assume $x \leq y$.  Then, the concavity of $\varrho_i$ implies $\varrho_i^\prime(x) \geq \varrho_i^\prime(y)$ for all $i \in \{1,\dots,m\}$. In addition, let $j \in \{1,\dots,m\}$ be given with $\varrho_j^\prime(y) = \max_{1\leq i \leq m} \varrho_i^\prime(y)$. Using $\varrho_i^\prime(t) > 0$ for all $t \in (0,\eta)$, $i \in \{1,\dots,m\}$, and \eqref{eq:qrho} for $\varrho_j$, we obtain
	\begin{align*}  \frac{1}{\varrho^\prime(x+y)}  &= \frac{1}{\varrho^\prime_j(x+y) + \sum_{i \neq j}\varrho^\prime_i(x+y)} \leq \frac{1}{\varrho^\prime_j(x+y)}  \leq C_{\rho_j} \left[ \frac{1}{\varrho_j^\prime(x)} + \frac{1}{\varrho_j^\prime(y)} \right] \\&\leq \frac{2C_{\rho_j}}{\varrho^\prime_j(y)} \leq \frac{2m
			C_{\rho_j}}{\varrho^\prime(y)} \leq 2m \max_{1\leq i \leq m} C_{\rho_i} \left[ \frac{1}{\varrho^\prime(x)} + \frac{1}{\varrho^\prime(y)} \right]. \end{align*}
Thus, $\varrho$ satisfies condition \eqref{eq:qrho} with $C_\rho := 2m \max_{1\leq i \leq m} C_{\rho_i}$ and we have $\varrho \in \mathcal Q_\eta$.
\end{proof}

\section{Proof of \cref{lemma:step size}} \label{app:sec-pf-step}
\begin{proof}
	We first note that the improper integral $\int_{t}^\infty \frac{\alpha^3}{(y+\beta)^{3\gamma}} \, \mathrm{d}y$ is finite. By the integral comparison test, we have
	$
	\sum_{j=t}^\infty \alpha_j^3 \leq \alpha_t^3 + \int_{t}^\infty \frac{\alpha^3}{(y+\beta)^{3\gamma}} \, \mathrm{d}y = \alpha_t^3 + \frac{\alpha^3}{3\gamma-1}\frac{1}{(t+\beta)^{3\gamma-1}}
	$
	and $\sum_{j=t}^\infty \alpha_j^3 \geq  \int_{t}^\infty \frac{\alpha^3}{(y+\beta)^{3\gamma}} \, \mathrm{d}y = \frac{\alpha^3}{3\gamma-1}\frac{1}{(t+\beta)^{3\gamma-1}}$. Using the monotonicity and subadditivity of the mapping $x \mapsto x^\theta$ and the bounds on $\sum_{j=t}^\infty \alpha_j^3$, we can infer
	$
	\frac{a_\theta}{(t+\beta)^{(3\gamma-1)\theta}} \leq [{\sum}_{j=t}^\infty \alpha_j^3]^\theta \leq \alpha^{3\theta}_t + \frac{a_\theta}{(t+\beta)^{(3\gamma-1)\theta}},
	$
	where $a_\theta := \alpha^{3\theta}/(3\gamma-1)^\theta$.
	Thus, we obtain
    \[
	    \sum_{t=k}^\infty \frac{a_\theta \alpha}{(t+\beta)^{(3\gamma-1)\theta +\gamma}} \leq \sum_{t=k}^\infty \alpha_t \left[{\sum}_{j=t}^\infty \alpha_j^3\right]^\theta \leq  \sum_{t=k}^\infty \alpha_t^{1+3\theta} + \sum_{t=k}^\infty \frac{a_\theta\alpha}{(t+\beta)^{(3\gamma-1)\theta+\gamma}}. 
	\]
	Setting $\nu := (1+3\theta)\gamma - (1+\theta)$, we can utilize the integral comparison test again to derive 
	$
	\sum_{t=k}^\infty \alpha_t^{3\theta+1}  \leq \alpha_k^{1+3\theta} + \int_{k}^\infty \frac{\alpha^{1+3\theta}}{(y+\beta)^{(1+3\theta)\gamma}} \, \mathrm{d}y = \alpha_k^{1+3\theta} + \frac{k+\beta}{(1+3\theta)\gamma-1} \cdot \alpha_k^{1+3\theta}
	$ 
	and 
	$
	\frac{\nu^{-1}}{(k+\beta)^{\nu}} \leq \sum_{t=k}^\infty \frac{1}{(t+\beta)^{(3\gamma-1)\theta+\gamma}}  \leq \frac{1}{(k+\beta)^{(3\gamma-1)\theta+\gamma}} + \frac{\nu^{-1}}{(k+\beta)^{\nu}}
	$.
   Plugging these bounds into the estimate for $\sum_{t=k}^\infty \alpha_t [{\sum}_{j=t}^\infty \alpha_j^3]^\theta$, it follows
\begingroup
\allowdisplaybreaks
	\begin{align*} 
		& \hspace{-2ex} \frac{a_\theta\alpha \nu^{-1}}{(k+\beta)^{\nu}}  \leq {\sum}_{t=k}^\infty \alpha_t  \left[{\sum}_{j=t}^\infty \alpha_j^3\right]^\theta \\
		&\leq \left[ \alpha_k^{1+3\theta} + \frac{k+\beta}{(1+3\theta)\gamma-1} \cdot \alpha_k^{1+3\theta} \right] + \frac{a_\theta\alpha}{(k+\beta)^{(3\gamma-1)\theta+\gamma}} + \frac{a_\theta\alpha \nu^{-1}}{(k+\beta)^{\nu}} \\
		& = \left\{ a_\theta\left[\frac{\alpha}{k+\beta}+\frac{\alpha}{\nu}\right] +\left[\frac{\alpha^{1+3\theta}}{(k+\beta)^{1+\theta}} + \frac{\alpha^{1+3\theta}}{(\gamma+3\theta\gamma-1)(k+\beta)^{\theta}}\right]\right\} \cdot\frac{1}{(k+\beta)^{\nu}}\\
		&\leq \bar a_\theta\cdot \frac{1}{(k+\beta)^{\nu}}.
	\end{align*}
\endgroup
where $\bar a_\theta:= \frac{a_\theta\alpha}{1+\beta}+\frac{a_\theta\alpha}{\nu} +\frac{\alpha^{1+3\theta}}{(1+\beta)^{1+\theta}} + \frac{\alpha^{1+3\theta}}{(\gamma+3\theta\gamma-1)(1+\beta)^{\theta}}$.
	In summary, this yields
	\[
	\frac{\underline{a}_\theta}{(k+\beta)^{(1+3\theta)\gamma-(1+\theta)}} \leq {\sum}_{t=k}^\infty \alpha_t \left[{\sum}_{j=t}^\infty \alpha_j^3\right]^\theta \leq \frac{\bar{a}_\theta}{(k+\beta)^{(1+3\theta)\gamma-(1+\theta)}}, 
	\] 
	where $\underline{a}_\theta := a_\theta\alpha [(1+3\theta)\gamma-(1+\theta)]^{-1}$. 
\end{proof}

\revise{
\section{The Supermartingale Convergence Theorem} \label{app:sup-conv-thm}
We present the supermartingale convergence theorem \cite[Proposition A.31]{Ber16} below for convenience.
\begin{lemma}
	\label{lem:sup-conv-thm}
	Let $\{y_k\}_{k\geq 1},\{z_k\}_{k\geq 1},\{w_k\}_{k\geq 1}$, and $\{v_k\}_{k\geq 1}$ be sequences such that
	\[y_{k+1} \leq (1+v_k)y_k - z_k + w_k,\quad k=1,2,\dots,\]
	$\{z_k\}_{k\geq 1},\{w_k\}_{k\geq 1}$ and $\{v_k\}_{k\geq 1}$ are nonnegative, and
	\[{\sum}_{k=1}^\infty w_k<\infty,\quad {\sum}_{k=1}^\infty v_k<\infty.\]
	Then, either $y_k\to -\infty$, or else $\{y_k\}_{k\geq 1}$ converges to a finite value and ${\sum}_{k=1}^\infty z_k<\infty$.
\end{lemma}
}


\bibliographystyle{siamplain}
\bibliography{references}
\end{document}